\documentclass[11pt,reqno]{amsart}
\allowdisplaybreaks[4]
\usepackage{amssymb}
\usepackage{anysize}
\usepackage{hyperref}
\usepackage{extarrows}

\numberwithin{equation}{section}
\newtheorem{thm}{Theorem}[section]
\newtheorem{lem}[thm]{Lemma}

\newtheorem{defi}[thm]{Definition}
\newtheorem{rem}[thm]{Remark}
\newtheorem{con}[thm]{Condition}
\newtheorem{Exa}{Example}[section]
\newcommand{\be}{\begin{equation}}
\newcommand{\ee}{\end{equation}}
\newcommand{\bea}{\begin{eqnarray*}}
\newcommand{\eea}{\end{eqnarray*}}

\makeatletter

\newcommand{\Rmnum}[1]{\expandafter\@slowromancap\romannumeral #1@}
\makeatother
\makeatletter 
\@addtoreset{equation}{section}
\makeatother  


\marginsize{35mm}{35mm}{38mm}{40mm}

\begin{document}

\title[]{Universality for a global property of the eigenvectors of Wigner matrices}
\author{Zhigang Bao}
\author{Guangming Pan}
\author{Wang Zhou}
\
\thanks{ Z.G. Bao was partially supported by NSFC grant
11071213, ZJNSF grant R6090034 and SRFDP grant 20100101110001;
G.M. Pan was partially supported by the Ministry of Education, Singapore, under grant \# ARC 14/11;
   W. Zhou was partially supported by the Ministry of Education, Singapore, under grant \# ARC 14/11,  and by a grant
R-155-000-116-112 at the National University of Singapore.}

\address{Department of Mathematics, Zhejiang University, P. R. China}
\email{zhigangbao@zju.edu.cn}

\address{Division of Mathematical Sciences, School of Physical and Mathematical Sciences, Nanyang Technological University, Singapore 637371}
\email{gmpan@ntu.edu.sg}

\address{Department of Statistics and Applied Probability, National University of
 Singapore, Singapore 117546}
\email{stazw@nus.edu.sg}

\subjclass[2010]{15B52, 28C10, 51F25, 60F05}

\date{\today}

\keywords{Wigner matrices, Haar distribution, orthogonal and unitary groups, eigenvectors, Brownian bridge}

\maketitle

\begin{abstract} Let $M_n$ be an $n\times n$ real (resp. complex) Wigner matrix and $U_n\Lambda_n U_n^*$ be its spectral decomposition. Set $(y_1,y_2\cdots,y_n)^T=U_n^*\mathbf{x}$, where $\mathbf{x}=(x_1,x_2,\cdots,$ $x_n)^T$ is a real (resp.  complex) unit vector. Under the assumption that the elements of $M_n$ have $4$ matching moments with those of GOE (resp. GUE), we show that the process $X_n(t)=\sqrt{\frac{\beta n}{2}}\sum_{i=1}^{\lfloor nt\rfloor}(|y_i|^2-\frac1n)$ converges weakly to the Brownian bridge for any $\mathbf{x}$ satisfying $||\mathbf{x}||_\infty\rightarrow 0$ as $n\rightarrow \infty$, where $\beta=1$ for the real case and $\beta=2$ for the complex case. Such a result indicates that the othorgonal (resp. unitary) matrices with columns being the eigenvectors of Wigner matrices are asymptotically Haar distributed on the orthorgonal (resp. unitary) group from a certain perspective.
\end{abstract}
\section{Introduction}
Let $M_n=\frac{1}{\sqrt{n}}(v_{ij})_{n,n}$ be an $n\times n$ real or complex Wigner matrix whose definition is stated below.
\begin{defi}[Real Wigner matrix] We call $M_n$ a real Wigner matrix if it is a symmetric random matrix such that $\{v_{ij}, 1\leq i\leq j\leq n\}$ is a collection of independent real random variables. And
$v_{ij}, 1\leq i<j\leq n$ are i.i.d with common mean $0$ and variance $1$. And $v_{ii}, 1\leq i\leq n$ are i.i.d with common mean $0$ and finite variance.
\end{defi}
\begin{defi}[Complex Wigner matrix] We call $M_n$ a complex Wigner matrix if it is an Hermitian random matrix such that $\{v_{ij}, 1\leq i\leq j\leq n\}$ is a collection of independent random variables. And
$v_{ij}, 1\leq i<j\leq n$  are i.i.d complex random vaiables with common mean $0$ and variance $1$. And $v_{ii}, 1\leq i\leq n$ are i.i.d real random variables with common mean $0$ and finite variance.
\end{defi}
\begin{Exa}[GOE and GUE] A real Wigner matrix with $N(0,1)$ off-diagonal elements and $N(0,2)$ diagonal elements is called GOE. And a complex Wigner matrix with $N(0,1/2)+\sqrt{-1}N(0,1/2)$ above-diagonal elements and $N(0,1)$ diagonal elements is called GUE. Here $N(0,1/2)+\sqrt{-1}N(0,1/2)$ stands for the standard complex normal  variable whose real and imaginary parts are independent.
\end{Exa}
Now we denote the ordered eigenvalues of $M_n$ by $\lambda_1\leq\cdots\leq \lambda_n$ and the corresponding normalized eigenvectors by $\mathbf{u}_1,\mathbf{u}_2,\cdots,\mathbf{u}_n$. Set $\Lambda_n={\rm{diag}}(\lambda_1,\cdots,\lambda_n)$ and write the spectral decomposition of $M_n$ as
\begin{eqnarray*}
M_n=U_n\Lambda_n U_n^*,
\end{eqnarray*}
where
\begin{eqnarray*}
U_n=(\mathbf{u}_1, \mathbf{u}_2,\cdots, \mathbf{u}_n).
\end{eqnarray*}
Conventionally, we require the coefficients of the eigenvectors to be real in the real case. However, the choices of the normalized eigenvectors are not unique owing to the following two reasons.\\

({\bf{r1}}): {\emph{If there is an $i\in\{1,2,\cdots,n\}$ such that $\lambda_i$ is not simple, one can arbitrarily choose an orthogonal basis of the eigenspace corresponding to $\lambda_i$.}}\\

({\bf{r2}}): {\emph{If every eigenvalue is simple, we can rotate the eigenvector by multiplying a sign $-1$ in the real case or any phase $e^{\sqrt{-1}\theta} (\theta\in\mathbb{R})$ in the complex case.}}\\

Note that, when the matrix elements are continuously distributed, ({\bf{r1}}) will cause no ambiguity since the eigenvalues are simple with probability one in this case. Moreover, even for the discontinuous case, it will be shown that whichever eigenvectors for the multiple eigenvalues have been taken will not affect the results of this paper (see the discussion at the end of Section 2 ).  And actually, ({\bf{r2}}) will also cause no trouble since the concerned quantities in this paper only depend on the projections
\[\mathbf{u}_i\mathbf{u}_i^*,\quad i=1,\cdots,n\]
 which are uniquely defined as long as the eigenvalues are simple. However, in order to eliminate the ambiguities in some issues we will adopt the following viewpoint to fix the definition of the eigenvector $\mathbf{u}_i$ when $\lambda_i$ is simple.\\

 {\bf{A viewpoint}}: \emph{In the complex case, one can replace $\mathbf{u}_i$ by $e^{\sqrt{-1}\theta_i}\mathbf{u}_i$, where $\theta_i$ is uniformly distributed on $[0,2\pi)$. Moreover, $\theta_i, i=1,\cdots, n$ are i.i.d and independent of the matrix $M_n$. In the real case, one can replace $\mathbf{u}_i$ by $b_i\mathbf{u}_i$, where $b_i,i=1,\cdots,n$ are i.i.d $\pm 1$ Bernoulli variables which are independent of $M_n$.}\\

Under the above viewpoint,  it is well known that when $M_n$ is GOE (resp. GUE), $U_n$ is Haar distributed on the orthogonal group $O(n)$ (resp. unitary group $U(n)$). Then it is natural to conjecture that $U_n$ of the general Wigner matrices is ``asymptotically'' Haar distributed in some sense. In other words, we care about the universal properties of the matrices of eigenvectors. In the past decades, a vast of work had been devoted to the study of the universality problems of various statistics of the eigenvalues. By contrast, the work on the universality of eigenvectors is much less. The most recent progresses on this aspect maybe the delocalization or localization property of the eigenvectors for different types of random matrices (see \cite{ESY2009}, \cite{ESY20092}, \cite{BP20121}, \cite{BG20121}, \cite{EK2011} and \cite{S2009} for instance) and the universality for the local statistics of the eigenvector coefficients (see \cite{KY2011}, \cite{TV2012}).

In this paper, we will discuss a universality result for a global property of the eigenvectors. Below we give the definition of the concerned quantity of our paper and then explain why it is closely related to the universality of the distribution of $U_n$.

Let $\mathbf{x}=(x_1,\cdots,x_n)^T$ be a  definite unit vector. That is to say, $\mathbf{x}\in S^{n-1}$ in the real case, and $\mathbf{x} \in S^{2n-1}$ in the complex case. Here
\begin{eqnarray*}
S^{n-1}:=\{\mathbf{r}\in\mathbb{R}^n:||\mathbf{r}||=1\},\quad
S^{2n-1}:=\{\mathbf{z}\in\mathbb{C}^n: ||\mathbf{z}||=1\}.
\end{eqnarray*}
Now we set the vector
\begin{eqnarray*}
\mathbf{y}=(y_1,\cdots,y_n)^T=U_n^*\mathbf{x}.
\end{eqnarray*}
Then we can construct a process $X_n(t) \in D[0,1]$
from the vector $\mathbf{y}$ as
\begin{eqnarray}
X_n(t)=\sqrt{\frac{\beta n}{2}} \sum_{i=1}^{\lfloor nt\rfloor}(|y_i|^2-\frac1n), \label{p.1}
\end{eqnarray}
where $\beta=1$ is for the real case and $\beta=2$ is for the complex case. Hereafter, the notation $\lfloor x\rfloor$ stands for integer part of $x$. In this paper, we will discuss the limit of the process (\ref{p.1}) in the weak sense. Such a problem was raised by Silverstein in \cite{Silverstein1990} and was shown to be closely related to the universality problem on $U_n$.

Note that when $U_n$ is Haar distributed on the orthogonal group $O(n)$ (resp. unitary group $U(n)$), it is well known that for any real (resp. complex) unit vector $\mathbf{x}$ one has that $\mathbf{y}$ is uniformly distributed on $S^{n-1}$ (resp. $S^{2n-1}$). Then for the real case, one has
\begin{eqnarray*}
\mathbf{y}\xlongequal{d} \frac{\mathbf{g}_{\mathbb{R}}}{||\mathbf{g}_\mathbb{R}||},
\end{eqnarray*}
where $\mathbf{g}_\mathbb{R}:=(g_1,g_2,\cdots,g_n)^T$ is a Gaussian vector with i.i.d. $N(0,1)$ coefficients. Similarly, for the complex case one has
\begin{eqnarray*}
\mathbf{y}\xlongequal{d} \frac{\mathbf{g}_{\mathbb{C}}}{||\mathbf{g}_\mathbb{C}||},
\end{eqnarray*}
where $\mathbf{g}_{\mathbb{C}}:=(\eta_1+
\sqrt{-1}\zeta_1,\cdots,\eta_n+\sqrt{-1}\zeta_n)^T$ is a Gaussian vector with i.i.d. $N(0,1/2)$ $+\sqrt{-1}N(0,1/2)$ coefficients. Note that actually we can choose the Gaussian variables with arbitrary common variance because of the scaling invariance. Here we just specify them to be standard for convenience. Then by using the classical results on weak convergence, it is elementary to see for the Gaussian case
\begin{eqnarray}
X_n(t)\stackrel{d}\Longrightarrow W^\circ(t),\quad t\in[0,1], \label{0.1}
\end{eqnarray}
where $W^\circ(t)$ is the standard Brownian bridge.

Conversely, (\ref{0.1}) reflects the fact that $\mathbf{y}$ is uniformly distributed on sphere in a
global sense. Thus if (\ref{0.1}) is valid for general Wigner matrices with a large class of $\mathbf{x}$, we can regard that $U_n$ of general case is ``asymptotically'' Haar distributed from such a certain perspective. Such a ``measure'' of closeness between the distribution of $U_n$ and the Haar distribution was first raised by Silverstein in \cite{Silverstein1990} for the sample covariance matrices. However, Silverstein only succeeded in proving the result for the unit vectors $\mathbf{x}=(\pm1/\sqrt{n},\cdots,\pm1/\sqrt{n})^T$ under the assumption that the matrix elements are symmetrically distributed. As discussed above, in the Gaussian case, $\mathbf{x}$ can be arbitrary. Thus it is crucial to verify (\ref{0.1}) for more general $\mathbf{x}$ rather than those in \cite{Silverstein1990}. In this paper, we will prove (\ref{0.1}) for a large class of Wigner matrices under the restriction of $||\mathbf{x}||_\infty\rightarrow 0$  which will be shown to be necessary for the concerned problem in the general distribution cases (see Remark \ref{rem.10.9} below). Here $||\mathbf{x}||_\infty=\max_{1\leq i\leq n}|x_i|$ is the maximum norm of $\mathbf{x}$. Moreover, we do not need the symmetrical distribution  condition imposed on the matrix elements.

To state our main result, we need an ad hoc terminology.
\begin{defi}[Matching to the $k$-th moments] We say that two Wigner matrices $M_n=1/\sqrt{n}(v_{ij})$ and $\widetilde{M}_n=1/\sqrt{n} (\tilde{v}_{ij})$ match to the $k$-th moments if for all $1\leq i,j\leq n$,
\begin{eqnarray*}
\mathbb{E}(\Re(v_{ij})^l\Im(v_{ij})^m)=\mathbb{E}(\Re(\tilde{v}_{ij})^l\Im(\tilde{v}_{ij})^m),\quad 0\leq l,m\leq l+m\leq k.
\end{eqnarray*}
\end{defi}
In the sequel, we will specify $k=4$. That means we require the elements of two concerned Wigner matrices have the same first four moments. Moreover, throughout the paper, we will need the following additional condition on the matrix elements.
\begin{con}\label{con.1} We assume the matrix elements $v_{ij}$'s have uniform subexponential dacay. That is,
\begin{eqnarray*}
\mathbb{P}(|v_{ij}|\geq t) \leq C^{-1}\exp(-t^C)
\end{eqnarray*}
with some positive constant $C$ independent of $i,j$.
\end{con}

Now we can state our main result.
\begin{thm}  \label{th.1}Assume that $M_n$ is a real (resp. complex) Wigner matrix matching GOE (resp. GUE) to the $4$-th moments. Moreover, we assume that $M_n$ satisfies Condition \ref{con.1}. For any definite real (resp. complex) unit vector $\mathbf{x}$ satisfying $||\mathbf{x}||_\infty\rightarrow 0$ as $n$ tends to infinity, we have
\begin{eqnarray*}
X_n(t)=\sqrt{\frac{\beta n}{2}}\sum_{i=1}^{\lfloor nt\rfloor}(|y_i|^2-\frac1n)\Longrightarrow W^\circ(t).
\end{eqnarray*}
Here $\beta=1,2$ in the real case and complex case respectively.
\end{thm}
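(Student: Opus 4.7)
The plan is a two-step comparison: first establish the Brownian-bridge limit for GOE and GUE directly from Haar invariance, and then transfer it to all four-moment matching Wigner matrices by a Green function comparison argument in the spirit of Tao--Vu.

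For the Gaussian ensembles, $U_n$ is Haar distributed on $O(n)$ (respectively $U(n)$), so $\mathbf{y}=U_n^*\mathbf{x}$ has the law of $\mathbf{g}/\|\mathbf{g}\|$ with $\mathbf{g}$ a standard real (respectively complex) Gaussian vector of length $n$. Consequently,
\begin{eqnarray*}
X_n(t)=\sqrt{\tfrac{\beta n}{2}}\Bigl(\tfrac{\sum_{i=1}^{\lfloor nt\rfloor}|g_i|^2}{\|\mathbf{g}\|^2}-\tfrac{\lfloor nt\rfloor}{n}\Bigr)=\tfrac{n}{\|\mathbf{g}\|^2}\sqrt{\tfrac{\beta}{2n}}\sum_{i=1}^{\lfloor nt\rfloor}(|g_i|^2-1)-\tfrac{n}{\|\mathbf{g}\|^2}\cdot\tfrac{\lfloor nt\rfloor}{n}\cdot\sqrt{\tfrac{\beta}{2n}}\sum_{j=1}^n(|g_j|^2-1)+o(1).
\end{eqnarray*}
Since $\|\mathbf{g}\|^2/n\to 1$ in probability and the i.i.d.\ sequence $|g_i|^2-1$ has variance $2/\beta$, Donsker's invariance principle yields a standard Brownian motion $W(t)$ as the limit of the first term and $tW(1)$ as the limit of the second, hence $X_n(t)\Rightarrow W(t)-tW(1)=W^\circ(t)$ in $D[0,1]$.

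To transfer this to a general $M_n$, rewrite the cumulative quantity as a spectral functional of the pair $(M_n,\mathbf{x})$:
\begin{eqnarray*}
\sum_{i=1}^{\lfloor nt\rfloor}|y_i|^2=\mathbf{x}^*P_t\mathbf{x}=\mathbf{x}^*\mathbf{1}\{M_n\le\lambda_{\lfloor nt\rfloor}\}\mathbf{x},
\end{eqnarray*}
and, using eigenvalue rigidity available under Condition \ref{con.1}, replace $\lambda_{\lfloor nt\rfloor}$ by the classical location $\gamma_t$ prescribed by the semicircle law. After regularizing $\mathbf{1}_{(-\infty,\gamma_t]}$ on a scale $\eta$ slightly above $n^{-1}$, the Helffer--Sj\"ostrand formula represents the resulting quantity as an integral of $\mathbf{x}^*G(z)\mathbf{x}$ with $G(z)=(M_n-z)^{-1}$. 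A Lindeberg replacement scheme, swapping the entries of $M_n$ one at a time with those of its Gaussian counterpart, shows $X_n(t)-X_n^{\rm Gauss}(t)=o(1)$: four-moment matching cancels the first four derivatives of the relevant functional of $G$ in each entry, while the isotropic local semicircle law and eigenvector delocalization (into which the assumption $\|\mathbf{x}\|_\infty\to 0$ feeds essentially) provide $n$-independent control of the Taylor error per swap. A multidimensional version of the same comparison handles joint finite-dimensional distributions.

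Tightness will be obtained from a Kolmogorov moment bound $\mathbb{E}|X_n(t)-X_n(s)|^4\le C(t-s)^2$, again reduced via Helffer--Sj\"ostrand to variance estimates for $\mathbf{x}^*G(E+i\eta)\mathbf{x}$ on scales $\eta\gtrsim n^{-1+\varepsilon}$ supplied by isotropic local laws. The hardest part will be the interchange between the sharp spectral projector $P_t$ and its smoothed surrogate: the replacement produces an error of order $n\eta\cdot\max_i|y_i|^2$ near the cutoff which must remain negligible after multiplication by the $\sqrt{n}$ normalization. Balancing the regularization scale $\eta$ against the bulk spacing $n^{-1}$ and the target fluctuation size $n^{-1/2}$, while pushing the entrywise comparison through $O(n^2)$ swaps with error $o(n^{-1/2})$ each, is where the technical core of the argument is expected to lie.
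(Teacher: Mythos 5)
Your broad plan matches the paper on the two pillars that matter: the Gaussian case is settled by Haar invariance plus a Donsker argument (exactly the paper's starting point for \eqref{0.1}), and tightness is reduced to a Kolmogorov fourth-moment bound $\mathbb{E}|X_n(t)-X_n(s)|^4\le C|t-s|^\alpha$, which is then proved by regularizing the sharp spectral projector via the resolvent, doing a one-entry Lindeberg swap on the Green function, and feeding in the isotropic local semicircle law and isotropic delocalization from Knowles--Yin. The paper implements precisely this via an approximation lemma for $\mathbf{1}_{\{s_1<\lambda_i\le s_2\}}$ in terms of $\frac1\pi\int_{s_1}^{s_2}\Im\frac{1}{\lambda_i-(E+\sqrt{-1}\eta)}dE$ at scale $\eta=n^{-1/2-\epsilon}(s_2-s_1)^{1/2}$ (so actually much larger than ``slightly above $n^{-1}$'' on the increments considered), followed by a Green-function comparison identical in spirit to what you outline.

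Where you diverge from the paper is in the finite-dimensional part. You propose to run the same four-moment Green-function comparison for joint marginals of $X_n(t)$. The paper instead invokes a pre-existing CLT of Bai and Pan for $\sqrt{\beta n}\bigl(\mathbf{x}^*g(M_n)\mathbf{x}-\tfrac1n \mathrm{tr}\, g(M_n)\bigr)$, combined with Silverstein's criterion that a $C[0,1]$-supported process is determined by the laws of its moments $\int u^r X(u)\,du$; this identifies $W^\circ$ as the unique $C[0,1]$-supported accumulation point. The advantage of the paper's route is that Bai--Pan only needs three matching moments and is an off-the-shelf ingredient, whereas your route would require you to re-derive a multidimensional CLT by telescoping, which is doable but heavier, and would make four-moment matching appear in a place where it is not actually needed. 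A second, smaller point: the hypothesis $\|\mathbf{x}\|_\infty\to 0$ is not what feeds the isotropic local law or delocalization --- those hold for any fixed unit vector --- it is the Bai--Pan CLT that genuinely requires $\|\mathbf{x}\|_\infty\to 0$, and this is where the condition is used in the paper. Finally, your phrasing ``$X_n(t)-X_n^{\rm Gauss}(t)=o(1)$'' is a coupling-type statement the Lindeberg swap does not deliver; what the swap gives is closeness of expectations of test functions, which is what you in fact need for finite-dimensional convergence, so this is a presentation issue rather than a gap. Overall the proposal is sound and hits the same hard step; the main deviation is substituting a comparison argument for the Bai--Pan theorem in the identification of the limit.
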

Hereafter, when we refer to ``limit'' and ``accumulation point'' of a random sequence, they are always in the sense of weak convergence. Moreover, for simplicity, when there is no confusion, we may omit the time parameter $t$ from $X_n(t)$ and $W^\circ(t)$.

The main proof strategy will benefit from the discussions in \cite{Silverstein1990}. Specifically, a criteria for weak convergence for a random sequence on $D[0,1]$ with its limit supported on $C[0,1]$ was provided in \cite{Silverstein1990} (see Theorem 3.1 of \cite{Silverstein1990}). Such a criteria can be regarded as a slight modification of the classical ``$finite~dimensional~convergence+tightness$'' issue.  The discussions in \cite{Silverstein1990} and the recent result of Bai and Pan \cite{BP2012} can help us to confirm that the unique possible $C[0,1]$-supported accumulation point of $(X_n)_{n\geq 1}$ is $W^\circ$.  Then it remains to show that $(X_n)_{n\geq 1}$ is tight and can only has $C[0,1]$-supported accumulation point.  However, it has been shown in \cite{Silverstein1990} that the proof of the tightness of the sequence $(X_n)_{n\geq 1}$ is an obstacle to this problem. In order to show the tightness, Silverstein imposed the additional symmetrical distribution condition on the matrix elements and restricted the discussions on the special cases of $\mathbf{x}=(\pm1/\sqrt{n},\cdots,\pm1\sqrt{n})^T$ in \cite{Silverstein1990} for the sample covariance matrices. Actually, Silverstein's proof can be adopted after slight modifications to the Wigner matrices under similar restrictions as those imposed in \cite{Silverstein1990} for the sample covariance matrices.

To remove the restrictions mentioned above, we will use a totally different method. A new input is the so-called isotropic local semicircle law proposed by Knowles and Yin in \cite{KY2012} quite recently. Crudely speaking, we can verify the tightness of $(X_n)_{n\geq 1}$ through providing some  good upper bounds on the fourth moments of the increments of $X_n(t)$. Such bounds will turn out to be easily obtained for the Gaussian case owing to the explicit distribution information of $\mathbf{y}$. For more general Wigner matrices, we will use the idea of comparing the general case with the Gaussian case. Such a comparison method relies on the classical Linderberg strategy, i.e. replacing the matrix elements by those of the ``reference'' matrix one pair (or one unit in the diagonal case) each time and then evaluating the change of the concerned quantity induced by the replacement on each step. Then by a telescoping argument we can get the difference of the concerned quantities of two Wigner matrices. Such an approach was used in the literature of the Random Matrix Theory recently. One can see \cite{TV2011}, \cite{EYY2010} and \cite{EYY2012} for instances. Particularly, one can refer to \cite{KY2011}, \cite{TV2012} and \cite{BG2012}  for the applications of such a strategy on some problems about the eigenvectors of the Wigner matrices.

More precisely, to provide the upper bounds on the fourth moments of the increments of $X_n(t)$, we will mainly pursue the idea of the Green function comparison approach raised by Erd\"{o}s, Yau and Yin in \cite{EYY2010}. To this end, at first, we will approximate the increment of the process by a quantity expressed in terms of the Green function. Then we will perform a replacement issue on the Green functions to achieve the purpose of comparison. Such a strategy will rely on the isotropic local semicircle law provided in \cite{KY2012}.

Our paper will be organized as follows. In Section 2,  we will present some necessary preliminaries. And in section 3, we will provide a criteria of the weak convergence of $X_n(t)$ which contains two statements. It will be shown that the first statement can be implied by a recent result of Bai and Pan \cite{BP2012}, thus we will just sketch the proof of this statement at the end of Section 3. The second statement is mainly about the tightness of the sequence $(X_n)_{n\geq 1}$, which will be handled in Section 4.

Throughout the paper, the notations $C, C_1, C'$ and $K$ will be used to denote some $n$-independent positive constants whose values may defer from line to line. The notation $||\cdot||_{op}$ stands for the operator norm of a matrix.

We will say an event $\mathbf{E}$ occurs with overwhelming probability if and only if
\begin{eqnarray*}
\mathbb{P}(\mathbf{E})\geq 1-n^{-K}
\end{eqnarray*}
for any given positive number $K$ when $n$ is sufficiently large.
\section{Preliminaries}
In this section, we will state some basic notions and recent results, especially some known results on the Green functions which will be frequently used in the proof of Theorem \ref{th.1}.

The so-called empirical spectral distribution (ESD) of $M_n$ is defined by
\begin{eqnarray*}
F_n(x):= \frac{1}{n} \sum_{i=1}^{n}\mathbf{1}_{\{\lambda_i\leq x\}}.
\end{eqnarray*}
It is well known that $F_n(x)$ almost surely converges weakly to Wigner's semicircle law $F_{sc}(x)$ whose density function is given by
\begin{eqnarray}
\rho_{sc}(x)=\frac{1}{4\pi^2} \sqrt{4-x^2}\mathbf{1}_{\{|x|\leq 2\}}. \label{p.0}
\end{eqnarray}
The Stieltjes transform of a probability measure $\mu$ can be defined for all complex number $z=E+\sqrt{-1}\eta\in \mathbb{C}\setminus \mathbb{R}$ as
\begin{eqnarray*}
m^{\mu}(z)=\int\frac{1}{x-z} \mu(dx).
\end{eqnarray*}
Here $E$ and $\eta$ are the real and imaginary parts of $z$ respectively. Thus by definitions, we have
\begin{eqnarray}
m^{F_n}(z)=\frac{1}{n}\sum_{i=1}^n\frac{1}{\lambda_i-z}=\frac{1}{n}tr(M_n-zI)^{-1}, \label{p.3}
\end{eqnarray}
and
\begin{eqnarray*}
m^{F_{sc}}(z)=\int_{-2}^2\frac{1}{x-z}\rho_{sc}(x)dx.
\end{eqnarray*}
For simplicity of the notation, we will briefly write $m^{F_n}(z)$ and $m^{F_{sc}}(z)$ as $m_n(z)$ and $m_{sc}(z)$ respectively. It is well known that
\begin{eqnarray*}
\sup_{z\in\mathbb{C}\setminus \mathbb{R}}|m_{sc}(z)|=O(1).
\end{eqnarray*}

The Green function $G_n(z)$ of $M_n$ is defined by
\begin{eqnarray*}
G_n(z):=(G_{ij}(z))_{n,n}=(M_n-zI_n)^{-1}
\end{eqnarray*}
which is also called the resolvent of $M_n$. Now by (\ref{p.3}), we also have
\begin{eqnarray*}
m_n(z)=\frac{1}{n}trG_n(z)=\frac{1}{n}\sum_{i}G_{ii}(z).
\end{eqnarray*}
It was shown by Erd\"{o}s, Yau and Yin in \cite{EYY2012} that when
\begin{eqnarray}
z\in \mathbf{S}:=\{E+\sqrt{-1}\eta: |E|\leq 5, n^{-1}(\log n)^{C\log\log n}< \eta\leq 10\} \label{101010}
\end{eqnarray}
for some positive constant $C$, one has that $m_n(z)$ is well approximated by $m_{sc}(z)$ with overwhelming probability. Moreover, it was proved in \cite{EYY2012} that $G_{ii}(z)$'s are close to $m_{sc}(z)$ and $G_{jk}(z)$'s $(j\neq k)$ are small in the sense that for some positive constant $C$,
\begin{eqnarray}
\max_i |G_{ii}(z)-m_{sc}(z)|+\max_{j\neq k}|G_{jk}(z)|\leq (\log n)^{C\log\log n} \left(\sqrt{\frac{\Im m_{sc}(z)}{n\eta}}+\frac{1}{n\eta}\right) \label{0.0.0}
\end{eqnarray}
holds uniformly for $z\in\mathbf{S}$
with overwhelming probability. See Theorem 2.1 of \cite{EYY2012} for details.

Now if we denote the standard basis of $\mathbb{R}^n$ by $\mathbf{e}_1,\mathbf{e}_2,\cdots,\mathbf{e}_n$ conventionally, i.e. $\mathbf{e}_i$ is the $n\times 1$ vector with only the $i$-th component being $1$ and the others being $0$. Then we can write
\begin{eqnarray*}
G_{ij}(z)=\mathbf{e}_i^*G_n(z)\mathbf{e}_j.
\end{eqnarray*}
Recently, Knowles and Yin generalized the estimation (\ref{0.0.0}) to the quantities
\begin{eqnarray*}
G_{\mathbf{v}\mathbf{w}}:=
\mathbf{v}^*
G_n(z)\mathbf{w}
\end{eqnarray*}
for any definite unit vectors $\mathbf{v}, \mathbf{w}$ in \cite{KY2012} and provided the so-called isotropic local semicircle law. Meanwhile, for any unit vector $\mathbf{v}$ they also provided in \cite{KY2012} the uniform upper bounds for the quantities
\begin{eqnarray*}
|\langle \mathbf{u}_i,\mathbf{v}\rangle|, i=1,\cdots, n.
\end{eqnarray*}
And they named the control on the quantities above as the isotropic delocalization of the eigenvectors, which can be viewed as a generalization of the delocalization property for eigenvectors raised in \cite{ESY2009}. Both the isotropic local semicircle law and isotropic delocalization property will be crucial to our analysis in the sequel. We remark here the assumptions imposed in \cite{KY2012} are weaker than those made in our paper. We refer to \cite{KY2012} for details and will not mention this fact again in the sequel. For convenience, we will reformulate their results as the following lemma under our assumptions.
\begin{lem}[Knowles and Yin, \cite{KY2012}] Under the assumptions of Theorem \ref{th.1}, we have the following two statements.\\

(1):(isotropic local semicircle law). For $z\in\mathbf{S}$, there exists some positive constant $C$ such that
\begin{eqnarray}
|\langle \mathbf{v}, G_n(z)\mathbf{w}\rangle-m_{sc}(z)\langle \mathbf{v}, \mathbf{w}\rangle |\leq (\log n)^{C\log\log n}\left(\sqrt{\frac{\Im m_{sc}(z)}{n\eta}}+\frac{1}{n\eta}\right) \label{p.5}
\end{eqnarray}
with overwhelming probability for all deterministic and normalized vectors $\mathbf{v},\mathbf{w}\in \mathbb{C}^n$.\\

(2):(isotropic delocalization). For any deterministic and normalized vector $\mathbf{v}\in \mathbb{C}^n$, we have
\begin{eqnarray}
\sup_{i}|\langle \mathbf{u}_i,\mathbf{v}\rangle|^2\leq \frac{(\log n)^{C\log\log n}}{n} \label{p.6}
\end{eqnarray}
for some positive constant $C$ with overwhelming probability.
\end{lem}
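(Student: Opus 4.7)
The plan is to handle the two parts in sequence: establish the isotropic local semicircle law (1) first, and then deduce the isotropic delocalization (2) as a direct corollary. For (1), my starting point is the entrywise local law (\ref{0.0.0}) of Erd\"{o}s--Yau--Yin. Writing
\[
\langle \mathbf{v}, (G_n(z)-m_{sc}(z)I_n)\mathbf{w}\rangle = \sum_{i}\bar{v}_i(G_{ii}(z)-m_{sc}(z))w_i + \sum_{i\neq j}\bar{v}_i G_{ij}(z) w_j,
\]
the diagonal piece is immediately controlled by (\ref{0.0.0}) and Cauchy--Schwarz, producing exactly the target bound. The off-diagonal piece is the difficult one, since the naive estimate gives only $\sum_{i\neq j}|\bar{v}_i w_j|\leq n$ and so loses a full factor of $n$; one must therefore exploit cancellation among the $G_{ij}$'s.

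To extract this cancellation I would use a high-moment (Markov) argument: estimate $\mathbb{E}|\sum_{i\neq j}\bar{v}_i G_{ij}(z) w_j|^{2p}$ for large but fixed $p$, and expand each $G_{ij}$ using iterated Schur-complement / minor-removal formulas. The resulting multiple sum over products of Green-function entries is to be controlled by a fluctuation-averaging step in the spirit of \cite{EYY2012}. The key structural input is that, after conditioning on the appropriate minor, the leading random quantities are mean-zero quadratic forms in the entries of an independent row/column of $M_n$; their high moments can then be bounded via (\ref{0.0.0}) together with the subexponential tails of Condition \ref{con.1}. Polarization reduces the two-vector statement to the case $\mathbf{v}=\mathbf{w}$, and a discretization of $z$ in $\mathbf{S}$, combined with the crude Lipschitz bound $||\partial_z G_n(z)||_{op}\leq \eta^{-2}$, promotes the resulting pointwise estimate to the uniform one claimed.

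For (2), the delocalization bound is a spectral-theoretic consequence of (1). The spectral decomposition $G_n(z)=\sum_k(\lambda_k-z)^{-1}\mathbf{u}_k\mathbf{u}_k^*$ yields, for $z=E+\sqrt{-1}\eta$,
\[
\Im\langle\mathbf{v},G_n(E+\sqrt{-1}\eta)\mathbf{v}\rangle \;=\; \sum_k\frac{\eta\,|\langle\mathbf{u}_k,\mathbf{v}\rangle|^2}{(\lambda_k-E)^2+\eta^2} \;\geq\; \frac{|\langle\mathbf{u}_i,\mathbf{v}\rangle|^2}{\eta}
\]
whenever $E=\lambda_i$. Applying (\ref{p.5}) at the scale $\eta=n^{-1}(\log n)^{C\log\log n}$, the right-hand side of (\ref{p.5}) becomes of order $\eta\cdot(\log n)^{C'\log\log n}$ for a suitable $C'$, and since $\Im m_{sc}$ is bounded on $\mathbf{S}$, we obtain $|\langle\mathbf{u}_i,\mathbf{v}\rangle|^2\leq n^{-1}(\log n)^{C''\log\log n}$ for each individual $i$; a union bound over the $n$ indices is absorbed into the slowly varying prefactor $(\log n)^{C\log\log n}$.

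The main obstacle is plainly the high-moment estimate underlying (1): the delicate bookkeeping required to guarantee that the final bound scales as $(n\eta)^{-1/2}$ rather than as the naive $(n/\eta)^{1/2}$, and to control all combinatorial contributions coming from iterated Schur complements uniformly over deterministic $\mathbf{v},\mathbf{w}$, is the technical core of the argument in \cite{KY2012}. Once (1) is obtained at the optimal scale $\eta$, (2) follows essentially automatically from the spectral-theoretic inequality above.
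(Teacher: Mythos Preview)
The paper does not give its own proof of this lemma: it is quoted verbatim as a result of Knowles and Yin \cite{KY2012}, and only two remarks follow (one on the independence of (\ref{p.6}) from the choice of eigenvectors, one on upgrading (\ref{p.5}) to a uniform-in-$z$ statement via an $\varepsilon$-net and the Lipschitz bound $\|G'(z)\|_{op}\leq\eta^{-2}$). So there is nothing in the present paper to compare your argument against; your sketch is, in outline, the strategy actually used in \cite{KY2012}---high-moment bounds on the off-diagonal bilinear form via iterated Schur complements and fluctuation averaging for (1), and the spectral inequality $\Im G_{\mathbf{v}\mathbf{v}}(\lambda_i+\sqrt{-1}\eta)\geq \eta^{-1}|\langle\mathbf{u}_i,\mathbf{v}\rangle|^2$ for (2).

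One small correction in your derivation of (2): at $\eta=n^{-1}(\log n)^{C\log\log n}$ the right-hand side of (\ref{p.5}) is \emph{not} of order $\eta\cdot(\log n)^{C'\log\log n}$; rather, it is $o(1)$ (since $n\eta=(\log n)^{C\log\log n}$), which combined with $|\Im m_{sc}|\leq 1$ gives $\Im G_{\mathbf{v}\mathbf{v}}=O(1)$, and hence $|\langle\mathbf{u}_i,\mathbf{v}\rangle|^2\leq \eta\cdot O(1)\leq n^{-1}(\log n)^{C''\log\log n}$. Also note that since $E=\lambda_i$ is random, you must use the version of (\ref{p.5}) that holds \emph{uniformly} over $z\in\mathbf{S}$ (this is exactly what the paper's second remark supplies), together with the fact that all eigenvalues lie in $[-5,5]$ with overwhelming probability.
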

\begin{rem} We remind here that the validity of (\ref{p.6}) does not depend on the ambiguity of the choices of the eigenvectors caused by ({\bf{r1}}) and ({\bf{r2}}). For details, see the proof of Theorem 2.5 of \cite{KY2012}.
\end{rem}
\begin{rem} Actually, we will need (\ref{p.5}) to hold uniformly for all $z\in \mathbf{S}$ with overwhelming probability in some discussions below. Note that
\begin{eqnarray}
||G'(z)||_{op}\leq \eta^{-2}. \label{p.7}
\end{eqnarray}
Now we choose an $\varepsilon$-net of $\mathbf{S}$ with $\varepsilon=n^{-K}$ with sufficiently large $K$. Then we have (\ref{p.5}) holds uniformly on the $\varepsilon$-net with overwhelming probability. By using (\ref{p.7}) and the elementary mean value theorem, we can get that (\ref{p.5}) uniformly holds on $\mathbf{S}$ with overwhelming probability by slightly adjusting the constant $C$ in (\ref{p.5}).
\end{rem}

At the end of this section we explain that the ambiguity caused by ({\bf{r1}}) does not influence the limit property of $X_n(t)$. Now let $\gamma_i:=\gamma_{i,n}\in[-2,2]$ be the classical location of $\lambda_i$ in the sense that
\begin{eqnarray*}
\int_{-2}^{\gamma_{i}}\rho_{sc}(x)dx=\frac{i}{n}.
\end{eqnarray*}
It is easy to check that for some positive constant $C$,
\begin{eqnarray}
|\gamma_{i}-\gamma_{i+1}|\leq C [\min (i,n-i+1)]^{-1/3}n^{-2/3},  \quad i=1,\cdots,n-1. \label{6.6.6.6.6}
\end{eqnarray}
By the rigidity property of the eigenvalues which was proved by Erd\"{o}s, Yau and Yin in \cite{EYY2012} we see that with overwhelming probability, the event
\begin{eqnarray}
\bigcap_{i=1}^n\{|\lambda_i-\gamma_i|\leq (\log n)^{C_1\log\log n}[\min (i,n-i+1)]^{-1/3}n^{-2/3}\} \label{7.7.7.7.7}
\end{eqnarray}
holds for some positive constant $C_1$ when $n$ is sufficiently large.
Now we assume that there is an $n_0$ such that
\begin{eqnarray*}
\lambda_{n_0}<\lambda_{n_0+1}
=\cdots=\lambda_{n_0+k}
<\lambda_{n_0+k+1}.
\end{eqnarray*}
 Note that although the eigenvectors  $\mathbf{u}_{n_0+1},\cdots, \mathbf{u}_{n_0+k}$ can be chosen in many different ways, the choice of the projection matrix
 \begin{eqnarray*}
 (\mathbf{u}_{n_0+1},\cdots, \mathbf{u}_{n_0+k})(\mathbf{u}_{n_0+1},\cdots, \mathbf{u}_{n_0+k})^*
 \end{eqnarray*}
 is unique. Thus the quantity
 \begin{eqnarray*}
 \sum_{i=n_0+1}^{n_0+k}(|y_i|^2-\frac1n)=\mathbf{x}^*
 (\mathbf{u}_{n_0+1},\cdots, \mathbf{u}_{n_0+k})(\mathbf{u}_{n_0+1},\cdots, \mathbf{u}_{n_0+k})^*
 \mathbf{x}-\frac{k}{n}
 \end{eqnarray*}
 is uniquely defined. This shows that the definition of $X_n(t)$ does not depend on the choices of the eigenvectors as long as $\lambda_{\lfloor nt\rfloor}$ is a simple eigenvalue. Now if $\lambda_{\lfloor nt\rfloor}$ is not simple, we can assume that $n_0+1\leq \lfloor nt\rfloor\leq n_0+k$ without loss of generality. Following from (\ref{6.6.6.6.6}) and (\ref{7.7.7.7.7}), it is not difficult to see with overwhelming probability, there is no eigenvalue with multiplicity larger than $(\log n)^{2C_1\log \log n}$. For $n_0+1\leq \lfloor nt\rfloor\leq n_0+k$, we can write
 \begin{eqnarray}
 X_n(t)=X_n(n_0/n)+\sqrt{\frac{\beta n}{2}}\sum_{i=n_0+1}^{\lfloor nt\rfloor}(|y_i|^2-\frac{1}{n}).\label{8.8.8.8.8}
 \end{eqnarray}
 Then by the fact $k\leq (\log n)^{2C_1\log \log n}$ with overwhelming probability and the isotropic delocalization property (\ref{p.6}) we see that the second term on the right hand side of (\ref{8.8.8.8.8}) (not well defined term) can be discarded in probability. Moreover, since both the upper bound of the multiplicity of  eigenvalue and isotropic delocalization property hold uniformly in $i=1,\cdots, n$, the above discussion also holds uniformly in $t\in[0,1]$. Thus, the limit behaviour of $X_n(t)$ does not depend on the ambiguity caused by ({\bf{r1}}).

\section{Uniqueness of the $C[0,1]$-supported accumulation point}
In this section, we will provide some known results and mainly show that $W^\circ$ is the unique $C[0,1]$-supported accumulation point of $(X_n)_{n\geq 1}$. Similar to the discussions in \cite{BG2012}, to prove Theorem \ref{th.1}, it suffices to verify the following two lemmas.\\

\begin{lem} \label{lem.1.1.1.1}Under the assumptions of Theorem \ref{th.1},
the sequence $(X_n)_{n\geq 1}$ has $W^\circ$ as its unique possible accumulation point supported on $C[0,1]$.
\end{lem}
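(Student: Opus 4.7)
The plan is to deduce Lemma \ref{lem.1.1.1.1} from the central limit theorem of Bai and Pan \cite{BP2012} for linear statistics of the vector empirical spectral distribution (VESD) $F_n^{\mathbf x}(x)=\sum_i|y_i|^2\mathbf{1}_{\{\lambda_i\le x\}}$. The underlying logic is standard: since $W^\circ$ has continuous sample paths and is determined by its finite-dimensional distributions, any $C[0,1]$-supported accumulation point of $(X_n)_{n\ge 1}$ must coincide with $W^\circ$ in law as soon as one knows that the finite-dimensional marginals of $X_n$ converge to those of $W^\circ$.

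First I would invoke the Cram\'er--Wold device. Fix $k\ge 1$, $0\le t_1<\cdots<t_k\le 1$ and $a_1,\ldots,a_k\in\mathbb{R}$, and rewrite
\begin{eqnarray*}
\sum_{j=1}^k a_j X_n(t_j)=\sqrt{\frac{\beta n}{2}}\sum_{i=1}^n c_i\Big(|y_i|^2-\frac{1}{n}\Big),\qquad c_i=\sum_{j\,:\,\lfloor nt_j\rfloor\ge i}a_j.
\end{eqnarray*}
Because the $\lambda_i$ are ordered, $c_i=g_n(\lambda_i)$ for the step function $g_n(\lambda)=\sum_j a_j\mathbf{1}_{\{\lambda\le\lambda_{\lfloor nt_j\rfloor}\}}$, so the right-hand side is a linear statistic of the VESD centered by its semicircle mean. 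Replacing the random cutoffs $\lambda_{\lfloor nt_j\rfloor}$ by the classical locations $\gamma_{\lfloor nt_j\rfloor}$ and absorbing the resulting error using the rigidity estimate (\ref{7.7.7.7.7}) together with the isotropic delocalization (\ref{p.6}) reduces the statistic to one driven by a deterministic, piecewise-constant test function. The Bai--Pan CLT then delivers a centered Gaussian limit under the standing hypotheses $\|\mathbf x\|_\infty\to 0$, four-moment matching and subexponential decay.

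Second, I would identify the limiting covariance. In the reference Gaussian case the representation $\mathbf y\stackrel{d}{=}\mathbf g/\|\mathbf g\|$ (uniform on the sphere) yields, by an elementary calculation,
\begin{eqnarray*}
\mathrm{Cov}\bigl(X_n(s),X_n(t)\bigr)\longrightarrow \min(s,t)-st,
\end{eqnarray*}
which is exactly the Brownian bridge covariance. By four-moment matching the same asymptotic covariance persists for the general $M_n$ considered in Theorem \ref{th.1}. The endpoints $X_n(0)=0$ and $X_n(1)=\sqrt{\beta n/2}(\|\mathbf y\|^2-1)=0$ (using $\|\mathbf y\|=\|\mathbf x\|=1$) are built in, so the Cram\'er--Wold conclusion upgrades to joint convergence of all finite-dimensional marginals of $X_n$ to those of $W^\circ$; this forces any $C[0,1]$-supported accumulation point to equal $W^\circ$ in distribution.

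The main obstacle at this stage is the bookkeeping needed to recast $\sum_j a_j X_n(t_j)$ as a bona fide Bai--Pan statistic, that is, handling the random cutoffs $\lambda_{\lfloor nt_j\rfloor}$ and checking that the Bai--Pan covariance formula, when specialized to indicator test functions of the form $\mathbf{1}_{(-\infty,\gamma_{\lfloor nt_j\rfloor}]}$, collapses to $\min(s,t)-st$. The genuinely difficult part of Theorem \ref{th.1} --- the tightness of $(X_n)_{n\ge 1}$ and the guarantee that every accumulation point is $C[0,1]$-supported --- is the subject of the companion lemma and is not addressed here.
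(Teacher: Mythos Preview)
Your proposal has a genuine gap at the point where you invoke the Bai--Pan theorem. Theorem \ref{thm.2.1} (which is the Bai--Pan CLT as stated in the paper and in \cite{BP2012}) applies only to test functions $g_1,\ldots,g_k$ that are \emph{analytic} on an open interval containing $[-2,2]$. The test functions you produce after replacing the random cutoffs $\lambda_{\lfloor nt_j\rfloor}$ by the classical locations $\gamma_{\lfloor nt_j\rfloor}$ are indicator functions $\mathbf{1}_{(-\infty,\gamma_{\lfloor nt_j\rfloor}]}$, which are not even continuous. Thus the Bai--Pan CLT, as available here, cannot be applied to your statistic, and the asserted Gaussian limit is not justified. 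Extending the CLT to such rough test functions is essentially as hard as proving the lemma itself.

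This is exactly why the paper does \emph{not} attempt to prove finite-dimensional convergence of $X_n(t)$ directly. Instead it argues indirectly: after the change of variables $t=F_{sc}(u)$ one tests the process against the monomials $u^r$, and integration by parts turns $\int u^r\,dX_n(F_n(u))$ into $\sqrt{\beta n/2}\bigl(\mathbf{x}^*M_n^r\mathbf{x}-n^{-1}\mathrm{tr}\,M_n^r\bigr)$, which is $W_n(g)$ for the \emph{polynomial} (hence analytic) test function $g(x)=x^r$. Bai--Pan then legitimately yields the joint convergence of $\{\int u^r X_n(F_{sc}(u))\,du\}_{r\ge 0}$ to $\{\int u^r W^\circ(F_{sc}(u))\,du\}_{r\ge 0}$. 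The remaining step, statement (\textbf{b}), is a Stone--Weierstrass type uniqueness argument showing that a $C[-2,2]$-supported process is determined in law by these moment functionals. The upshot is that the paper never establishes $(X_n(t_1),\ldots,X_n(t_k))\Rightarrow (W^\circ(t_1),\ldots,W^\circ(t_k))$; it only shows that any $C[0,1]$-supported subsequential limit must have the right moment functionals and hence be $W^\circ$. Your Cram\'er--Wold route would need a substantially stronger input than Theorem \ref{thm.2.1} provides.
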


\begin{lem}\label{lem.2.2.2.2} Under the assumptions of Theorem \ref{th.1}, the sequence $(X_n)_{n\geq 1}$ is tight and can only have $C[0,1]$-supported accumulation points.
\end{lem}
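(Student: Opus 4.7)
The plan is to verify the fourth-moment form of the standard tightness criterion: that for all $0 \leq s \leq t \leq 1$,
\[
\mathbb{E}|X_n(t) - X_n(s)|^4 \leq C(t-s)^2 + o(1)
\]
uniformly in $s,t$. Combined with eigenvalue rigidity (\ref{7.7.7.7.7}) and the isotropic delocalization (\ref{p.6}), which force the jumps of $X_n$ at any single $t$ to vanish in probability at rate $n^{-1}(\log n)^{C\log\log n}$, this bound will yield both tightness in $D[0,1]$ and confinement of every accumulation point to $C[0,1]$ via a version of Silverstein's criterion cited after Theorem \ref{th.1}.

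The first task is to convert the partial sum $\sum_i(|y_i|^2 - 1/n)$ into an object accessible through Green functions. Up to the factor $\sqrt{\beta n/2}$, the increment equals
\[
\mathbf{x}^* \mathbf{1}_{I_n}(M_n)\mathbf{x} - \frac{\lfloor nt\rfloor - \lfloor ns\rfloor}{n},
\]
where $I_n$ is close to the spectral interval $(\gamma_{\lfloor ns\rfloor}, \gamma_{\lfloor nt\rfloor}]$; using (\ref{6.6.6.6.6})--(\ref{7.7.7.7.7}) together with (\ref{p.6}), the symmetric difference involves at most $(\log n)^{C\log\log n}$ eigenvectors and contributes only an $o(1)$ error to the fourth moment. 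I would then represent $\mathbf{x}^* \mathbf{1}_{I_n}(M_n)\mathbf{x}$ via a Helffer-Sj\"ostrand contour integral of $\langle \mathbf{x}, G_n(z)\mathbf{x}\rangle$ against a smoothed indicator of $I_n$, truncated at the mesoscopic scale $\eta_0 = n^{-1}(\log n)^{C\log\log n}$ on which (\ref{p.5}) is in force. The fourth-moment bound then reduces to controlling a quadruple integral of $\langle \mathbf{x}, G_n(z_j)\mathbf{x}\rangle - m_{sc}(z_j)$ at four spectral parameters against four copies of the Helffer-Sj\"ostrand kernel.

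For the comparison with GOE/GUE, I would follow the Green function comparison strategy of \cite{EYY2010}: replace the entries of $M_n$ by those of the matched Gaussian matrix one symmetric pair (resp.\ one diagonal entry) at a time and Taylor-expand the fourth-moment quantity to fourth order in the single changed entry. The four matching moments cancel all terms up to order three, while the $n^{-1/2}$ scaling of the entries together with the uniform derivative bounds on $G_n$ — obtained from (\ref{p.5})--(\ref{p.6}) and the subexponential tails of Condition \ref{con.1} — give a per-swap error of order $n^{-5/2-\varepsilon}$; summing over $O(n^2)$ swaps yields a total error of $o(1)$. For the Gaussian baseline, $\mathbf{y}$ is uniform on $S^{n-1}$ (resp.\ $S^{2n-1}$), so $(|y_i|^2)_i \stackrel{d}{=} (g_i^2)_i / \sum_j g_j^2$ with i.i.d.\ $\beta$-chi-squared coordinates, and explicit moment identities give $\mathbb{E}|X_n(t) - X_n(s)|^4 \leq C(t-s)^2$, matching the covariance of the Brownian bridge.

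The main obstacle will be the uniform-in-$(s,t)$ fourth-moment control of the Green function quadruple product. Because $I_n$ can be as narrow as $\sim 1/n$, the contour integral must be performed at spectral parameter $\eta$ just barely above $n^{-1}$ — precisely the scale where (\ref{p.5}) becomes sharp — and each of the four correlated factors $\langle \mathbf{x}, G_n(z_j)\mathbf{x}\rangle$ at distinct $z_j$'s must be tracked through the Lindeberg swaps. I expect to need several rounds of iterated resolvent expansion, with careful bookkeeping of the cross-cancellations enabled by the four-moment matching assumption, in order to prevent the inverse powers of $\eta$ coming from $\partial G_n/\partial v_{ij}$ from destroying the gain obtained from moment matching.
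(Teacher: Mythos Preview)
Your overall strategy---fourth-moment control of the increments, Green function representation, Lindeberg comparison with the Gaussian baseline computed via the uniform distribution on the sphere---is the same as the paper's. But you are missing one reduction that dissolves precisely the obstacle you flag at the end.

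You propose to bound $\mathbb{E}|X_n(t)-X_n(s)|^4$ uniformly over all $s,t$, which (as you correctly observe) forces the spectral parameter down to $\eta$ just above $n^{-1}$ when $|t-s|\sim 1/n$, and makes the resolvent-expansion bookkeeping delicate. The paper never confronts this: its Lemma~\ref{lem.3.4.5.6} shows, using only the isotropic delocalization (\ref{p.6}) to control short-range oscillations, that for tightness it suffices to verify
\[
\mathbb{E}\bigl(X_n(t_2)-X_n(t_1)\bigr)^4 \;\leq\; C\,|t_2-t_1|^{\alpha},\qquad \alpha>1,
\]
\emph{only} for $|t_2-t_1|\geq n^{-1/2-\epsilon}$. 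With this restriction the paper can take $\eta=n^{-1/2-\epsilon}(s_2-s_1)^{1/2}\gtrsim n^{-3/4-3\epsilon/2}$, a comfortable mesoscopic scale on which (\ref{p.5}) has room to spare; the Helffer--Sj\"ostrand machinery is then unnecessary and the paper simply uses the Poisson-kernel (arctan) approximation of the indicator (Lemma~\ref{lem.t.1}).

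Two further points of divergence. First, the paper does not achieve exponent $2$ in the $t$-variable: it proves $\mathbb{E}(Y_n(s_2)-Y_n(s_1))^4\leq C(s_2-s_1)^2$ after the change $s=F_{sc}^{-1}(t)$, but the edge behaviour $|s_2-s_1|\leq C|t_2-t_1|^{2/3}$ degrades this to $\alpha=4/3$, which is still enough. Second, the comparison step needs a \emph{bootstrap}: a first pass through the telescoping gives only the crude bound $n^{-3/2+O(\epsilon)}(s_2-s_1)^2$ (Lemma~\ref{lem.t.3}), which is then fed back as an improved a priori estimate on the leading term $\mathcal{F}_0$ before the second pass yields (\ref{10.4}). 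A single round of four-moment Lindeberg does not close; the per-swap error summed over $O(n^2)$ pairs is too large without the intermediate improvement.
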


The remaining part of this section will be devoted to the proof of Lemma \ref{lem.1.1.1.1}.  To this end , we will need the Theorem 1.2 of \cite{BP2012}. For convenience of the reader, we rewrite it here. Let
\begin{eqnarray*}
W_n(g)=\sqrt{\beta n} (\mathbf{x}^*g(M_n)\mathbf{x}-\frac1ntr g(M_n)),
\end{eqnarray*}
where $g(x)$ is a function analytic on a region in the complex plane covering the interval $[-2,2]$. Then we have the following theorem provided by Bai and Pan in \cite{BP2012}
\begin{thm} [Bai and Pan, \cite{BP2012}]\label{thm.2.1}
Let $M_n$ be a real or complex Wigner matrix satisfying $\mathbb{E}|v_{12}|^4<\infty$. Suppose that $g_1,\cdots,g_k$ are analytic on an open interval including $[-2,2]$, and that
\begin{eqnarray*}
||\mathbf{x}||_\infty\rightarrow 0.
\end{eqnarray*}
(1) If $M_n$ is real, i.e. $\beta=1$, and $\mathbb{E}v_{12}^3=0$, then $W_n(g_1),\cdots, W_n(g_k)$ converges weakly to a Gaussian vector $W_f$ with mean zero and covariance function
\begin{eqnarray}
Cov(W_{g_1},W_{g_2})=2\left(\int g_1(x)g_2(x)dF_{sc}(x)-\int g_1(x)dF_{sc}(x)\int g_2(x) dF_{sc}(x)\right).\label{10.1}
\end{eqnarray}
(2) If $M_n$ is complex, i.e. $\beta=2$, and $\mathbb{E}v_{12}^2=0$ and $\mathbb{E}v_{12}^2\bar{v}_{12}=0$, then (1) remains true.
\end{thm}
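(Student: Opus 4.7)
The plan is to use the Stieltjes transform / resolvent method. Since each $g_j$ is analytic on an open set containing $[-2,2]$, pick a common rectifiable contour $\Gamma$ that encloses $[-2,2]$ and lies in the joint domain of analyticity. By Cauchy's formula, $g_j(M_n) = -\frac{1}{2\pi i}\oint_\Gamma g_j(z)G_n(z)\,dz$, so
\begin{equation*}
W_n(g_j) = -\frac{1}{2\pi i}\oint_\Gamma g_j(z)\, M_n(z)\,dz, \quad M_n(z) := \sqrt{\beta n}\Big(\mathbf{x}^* G_n(z)\mathbf{x} - \tfrac{1}{n}\mathrm{tr}\, G_n(z)\Big).
\end{equation*}
Thus the theorem reduces to establishing weak convergence of the random analytic process $M_n(\cdot)$ on $\Gamma$ to a centered Gaussian process $M(\cdot)$, and then invoking the continuous mapping theorem together with the Cram\'er--Wold device for joint convergence across $g_1,\ldots,g_k$.

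For the finite-dimensional convergence, I would use the martingale CLT. Let $\mathcal{F}_k$ be the $\sigma$-algebra generated by the first $k$ rows and columns of $M_n$, and set $\mathbb{E}_k = \mathbb{E}[\,\cdot\,|\mathcal{F}_k]$. Decompose
\begin{equation*}
M_n(z) - \mathbb{E}M_n(z) = \sum_{k=1}^n (\mathbb{E}_k - \mathbb{E}_{k-1}) M_n(z),
\end{equation*}
and use the Schur complement together with Sherman--Morrison type identities to express each martingale difference in terms of the $k$-th column of $M_n$, the coordinate $x_k$, and the resolvent of the $(n{-}1)\times(n{-}1)$ minor obtained by deleting row and column $k$. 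The conditional variances $\sum_k \mathbb{E}_{k-1}|(\mathbb{E}_k-\mathbb{E}_{k-1})M_n(z_\alpha)|^2$ can then be computed asymptotically using the isotropic local semicircle law (\ref{p.5}); under the stated moment matching hypotheses they converge in probability to a deterministic limit which supplies the covariance of $M(\cdot)$. The Lindeberg condition for the martingale CLT is readily checked under $\mathbb{E}|v_{12}|^4<\infty$.

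Tightness of $M_n(\cdot)$ on $\Gamma$ follows from a uniform estimate $\mathbb{E}|M_n(z)-M_n(z')|^2 \leq C|z-z'|^2$, obtained from the isotropic law plus the deterministic bound $|M_n(z)| = O(|\Im z|^{-2})$ away from the real axis. The expectation $\mathbb{E}M_n(z)$ must also be shown to vanish in the limit: the conditions $\mathbb{E}v_{12}^3=0$ in the real case and $\mathbb{E}v_{12}^2=\mathbb{E}v_{12}^2\bar{v}_{12}=0$ in the complex case are precisely what kill the leading-order mean correction for the bilinear form $\mathbf{x}^*G_n(z)\mathbf{x}$. A double contour integration of the limiting covariance then collapses via Cauchy's formula to the stated expression (\ref{10.1}), and Cram\'er--Wold promotes one-dimensional marginals to joint $k$-dimensional convergence.

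The main obstacle is the covariance computation for the bilinear form, which is significantly more intricate than for the trace. Expanding $\mathbf{x}^*G_n(z)\mathbf{x}=\sum_{i,j}\bar{x}_ix_j G_{ij}$ and tracking the martingale-difference variance produces a doubly weighted sum over indices, and it is a priori unclear what role the weights $|x_i|^2$ play asymptotically. The assumption $||\mathbf{x}||_\infty \to 0$ is essential here: it forces the diagonal contribution $\sum_i |x_i|^4$ to vanish, which is what suppresses any non-Gaussian correction carried by the fourth cumulant of $v_{ij}$ and leaves the limiting covariance depending only on the second moments of the entries. In this sense $||\mathbf{x}||_\infty\to 0$ plays, for the bilinear CLT, the role that fourth-moment matching plays for classical linear spectral statistics.
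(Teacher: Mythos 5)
The paper itself offers no proof of Theorem \ref{thm.2.1}: it is quoted verbatim from Bai and Pan \cite{BP2012}, so the only meaningful comparison is with the proof in that reference. Your outline—reduce $W_n(g_j)$ to the resolvent process via a contour integral, then prove a CLT for $\sqrt{\beta n}(\mathbf{x}^*G_n(z)\mathbf{x}-\frac1n\mathrm{tr}\,G_n(z))$ by a martingale-difference decomposition in the filtration of rows/columns, with tightness on the contour and Cram\'er--Wold at the end—is indeed the same Bai--Silverstein-type route that \cite{BP2012} follows, and your heuristic that $\sum_i|x_i|^4\leq\|\mathbf{x}\|_\infty^2\to 0$ is what suppresses fourth-cumulant contributions to the limiting covariance is correct in spirit.

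There is, however, a genuine mismatch of hypotheses in the tools you invoke. The isotropic local semicircle law (\ref{p.5}) is established (here and in \cite{KY2012}) under the subexponential decay assumption of Condition \ref{con.1}, whereas Theorem \ref{thm.2.1} assumes only $\mathbb{E}|v_{12}|^4<\infty$; under a fourth-moment-only hypothesis that input is simply not available, so neither your conditional-variance computation nor your tightness estimate can lean on it. It is also unnecessary: on a contour at distance of order one from $[-2,2]$, classical quadratic-form moment bounds of the type $\mathbb{E}|\mathbf{r}^*B\mathbf{r}-\frac1n\mathrm{tr}\,B|^p\leq C_p(\cdots)$ and the standard resolvent identities suffice, and this is what \cite{BP2012} actually uses. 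A second gap is the contour itself: where $\Gamma$ crosses the real axis outside $[-2,2]$ the deterministic bound $O(|\Im z|^{-2})$ is useless, and the Cauchy representation requires all eigenvalues to lie inside $\Gamma$; with only four moments this holds only with probability tending to one, so a truncation/exceptional-event argument is needed there. Finally, the two steps that constitute the real content of the theorem—the asymptotic evaluation of $\sum_k\mathbb{E}_{k-1}|(\mathbb{E}_k-\mathbb{E}_{k-1})M_n(z)|^2$ yielding the covariance that collapses to (\ref{10.1}), and the proof that the mean term vanishes, which is precisely where the conditions $\mathbb{E}v_{12}^3=0$ (resp. $\mathbb{E}v_{12}^2=\mathbb{E}v_{12}^2\bar v_{12}=0$) and $\|\mathbf{x}\|_\infty\to0$ enter—are asserted rather than carried out, so as it stands the proposal is a plausible roadmap reproducing the strategy of \cite{BP2012}, not a proof.
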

\begin{rem} We remind here that $W_n(g)$ in the complex case is different from $X_n(g)$ in \cite{BP2012} in scaling.
\end{rem}
\begin{rem} \label{rem.10.9} It has been shown that the condition $||\mathbf{x}||_\infty\rightarrow 0$ is necessary for Theorem \ref{thm.2.1}. See Remark 1.4 of \cite{BP2012}.
\end{rem}
Now we begin to prove Lemma \ref{lem.1.1.1.1}.
\begin{proof}[Proof of Lemma \ref{lem.1.1.1.1}]
Changing the variable $t$ by $F_{sc}(u)$, one can see that it is equivalent to verify that
the sequence $(X_n(F_{sc}(u)))_{n\geq 1}$ has $W^\circ(F_{sc}(u))$ as its unique possible accumulation point supported on $C[-2,2]$.
We claim that it suffices to show the following two statements. \\

({\bf{a}}):\emph{ We have
 \begin{eqnarray*}
 \left\{\int_{-2}^2u^r X_n(F_{sc}(u))du\right\}_{r=0}^\infty\Longrightarrow
 \left\{\int_{-2}^2u^rW^\circ({F_{sc}(u)})du\right\}_{r=0}^\infty.
 \end{eqnarray*}}

 ({\bf{b}}): \emph{The distribution of a process $X(u)$ supported on $C[-2,2]$ is uniquely determined by the distribution of
 \begin{eqnarray*}
 \left\{\int_{-2}^2u^rX(u)du\right\}_{r=0}^\infty.
 \end{eqnarray*}}
Below we sketch the proof of Lemma \ref{lem.1.1.1.1} providing ({\bf{a}}) and ({\bf{b}}) at first.
Note that if we assume that one convergent subsequence $\{X_{n'}(F_{sc}(u))\}$ converges weakly to some $C[-2,2]$-supported process $X(u)$, then by Theorem 5.1 of \cite{Billingsley1968}, one has
\begin{eqnarray*}
 \left\{\int_{-2}^2u^r X_{n'}(F_{sc}(u))du\right\}_{r=0}^\infty\Longrightarrow
 \left\{\int_{-2}^2u^r X(u)du\right\}_{r=0}^\infty.
 \end{eqnarray*}
Meanwhile, by ({\bf{b}}) we also know that if $X(u)$ is  $C[-2,2]$-supported, its distribution is uniquely determined by the distribution of
  \begin{eqnarray*}
 \left\{\int_{-2}^2u^rX(u)du\right\}_{r=0}^\infty.
 \end{eqnarray*}
 Thus we have $\{X_{n'}(F_{sc}(u))\}$ converges weakly to $W^\circ(F_{sc}(u))$ as $n'\rightarrow\infty$. Therefore, we have Lemma \ref{lem.1.1.1.1} by ({\bf{a}}) and ({\bf{b}}).
 It remains to verify ({\bf{a}}) and ({\bf{b}}). The proof of ({\bf{b}}) is nearly the same as the counterpart of the proof for Theorem 3.1 of \cite{Silverstein1990}. Thus here we omit the detail.

To verify ({\bf{a}}) for $X_n(F_{sc}(u))$, we will work on the slight modification $X_n(F_{n}(u))$  instead. Note that by the rigidity property which was proved by Erd\"{o}s, Yau and Yin in \cite{EYY2012} we see that there exists some positive constant $C$  such that
\begin{eqnarray}
\sup_{|u|\leq 5} |F_n(u)-F_{sc}(u)|\leq \frac{(\log n)^{C\log\log n}}{n} \label{8889}
\end{eqnarray}
with overwhelming probability (see Theorem 2.2 of \cite{EYY2012}).
Thus we have for some positive constant $C'$
\begin{eqnarray*}
\sup_{|u|\leq 5}|X_n(F_{sc}(u))-X_n(F_n(u))|\leq \sqrt{n}(\log n)^{C\log\log n} \max_{i}(|y_i|^2+\frac{1}{n})\leq \frac{(\log n)^{C'\log\log n}}{\sqrt{n}}
\end{eqnarray*}
with overwhelming probability. Above we have used the isotropic delocalization property (\ref{p.6}). Therefore, it suffices to study the limit behaviour of
\begin{eqnarray*}
\left\{\int_{|u|\leq 5}u^rX_n(F_n(u))du\right\}
_{r=0}^{\infty}.
\end{eqnarray*}
Moreover, also by the rigidity property provided in \cite{EYY2012}, we see that all the eigenvalues of $M_n$ are in the interval $[-5,5]$ with overwhelming probability. Combining with the fact that
\begin{eqnarray*}
X_n(0)=X_n(1)=0,
\end{eqnarray*}
we have
\begin{eqnarray*}
\left\{\int_{|u|\leq 5}u^rX_n(F_n(u))du\right\}
_{r=0}^{\infty}=\left\{
\int_{-\infty}^{+\infty}u^rX_n(F_n(u))du\right\}
_{r=0}^{\infty}
\end{eqnarray*}
with overwhelming probability.
Relying on the discussion above one can transfer the problem to show that
 \begin{eqnarray*}
 \left\{\int_{-\infty}^\infty u^r X_n(F_n(u))du\right\}_{r=0}^\infty\Longrightarrow
 \left\{\int_{-2}^2u^r W^\circ(F_{sc}(u))du\right\}_{r=0}^\infty.
 \end{eqnarray*}
 By integration by parts, it suffices to verify
 \begin{eqnarray}
 \left\{\int_{-\infty}^\infty u^r dX_n(F_n(u))\right\}_{r=0}^\infty\Longrightarrow
 \left\{\int_{-2}^2u^rd W^\circ(F_{sc}(u))\right\}_{r=0}^\infty. \label{3.3.3.3}
 \end{eqnarray}

 Note that
 \begin{eqnarray*}
 \int_{-\infty}^\infty u^r dX_n(F_n(u))=\sqrt{\frac{\beta n}{2}}\left(\mathbf{x}^*M_n^r\mathbf{x}-\frac1ntr M_n^r\right).
 \end{eqnarray*}
 Thus we arrive at the stage to use Theorem \ref{thm.2.1}. Note that Theorem \ref{thm.2.1} only depends on the first three moments of the matrix elements. And the GOE and GUE obviously satisfy the moment assumptions in Theorem \ref{thm.2.1}. Moreover, by the discussions above and (\ref{0.1}) for the Gaussian case, (\ref{3.3.3.3}) is valid for GOE and GUE obviously. Hence, (\ref{3.3.3.3}) also holds for general Wigner matrices under the assumptions of Theorem \ref{th.1}. So we complete the proof.
 \end{proof}
\section{Tightness of $(X_n)_{n\geq 1}$}
In this section, we will prove Lemma \ref{lem.2.2.2.2}. At first, we show that the process sequence $(X_n)_{n\geq 1}$ can only have $C[0,1]$-supported accumulation points. It suffices to check that the maximal jump of the process $X_n(t)$ converges to zero in probability. This can be seen directly from the isotropic delocalization property (\ref{p.6}). Thus the remaining part of this section will be devoted to showing the tightness of the process sequence $(X_n)_{n\geq 1}$, which is the main part of our proof. To this end, we begin with the modulus of continuity of the process $X_n$ as
\begin{eqnarray*}
w_{X_n}(\delta)=w(X_n,\delta):=\sup_{|t_1-t_2|<\delta}|X_n(t_1)-X_n(t_2)|,\quad 0<\delta\leq 1.
\end{eqnarray*}
By Theorem 8.2 of the Billingsley's book \cite{Billingsley1968}, to prove the tightness of $(X_n)_{n\geq 1}$, it suffices to show the following two statements.\\

({\bf{I}}): \emph{For each positive $\eta$, there exists an $a$ such that
\begin{eqnarray*}
\mathbb{P}(|X_n(0)|>a)\leq \eta, \quad n\geq 1
\end{eqnarray*}}
and

({\bf{II}}): \emph{For each positive $\varepsilon$ and $\eta$, there exists a $\delta$, with $0<\delta<1$, and an integer $n_0$ such that
\begin{eqnarray*}
\mathbb{P}(w_{X_n}(\delta)\geq\varepsilon)\leq \eta, \quad n\geq n_0.
\end{eqnarray*}}

Note that ({\bf{I}}) is obvious in our case since
\begin{eqnarray*}
\mathbb{P}(X_n(0)=0)=1,\quad n\geq 1.
\end{eqnarray*}
Therefore, it suffices to show ({\bf{II}}) in the sequel. We will rely on the following lemma.
\begin{lem} \label{lem.3.4.5.6} Assume that $M_n$ satisfies the assumptions imposed in Theorem \ref{th.1}. Let $\epsilon$ be some sufficiently small but fixed positive constant. If for any $t_1,t_2\in[0,1]$ satisfying $|t_2-t_1|\geq n^{-1/2-\epsilon}$ there exists
\begin{eqnarray}
\mathbb{E}\left(X_n(t_2)-X_n(t_1)\right)^4\leq C|t_2-t_1|^\alpha \label{7.8.9}
\end{eqnarray}
with some positive constants $C$ and $\alpha>1$ which are both independent of $t_1,t_2$, then ({\bf{II}}) holds.
\end{lem}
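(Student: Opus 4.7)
The plan is to prove \textbf{(II)} by combining a deterministic small-scale oscillation estimate, obtained from the isotropic delocalization (\ref{p.6}), with a Kolmogorov--Chentsov-type maximal inequality applied to a grid of scale $\delta_n := n^{-1/2-\epsilon}$ on which the moment hypothesis (\ref{7.8.9}) is available. First, on the event $\Omega_n$ of overwhelming probability on which (\ref{p.6}) holds one has $\bigl||y_i|^2 - 1/n\bigr| \leq (\log n)^{C\log\log n}/n$ for every $i$, so for any $t_1 \leq t_2$ with $t_2 - t_1 \leq \delta_n$,
\begin{equation*}
|X_n(t_2) - X_n(t_1)| \;\leq\; \sqrt{\beta n/2}\,\bigl(\lfloor n t_2\rfloor - \lfloor n t_1\rfloor\bigr)\frac{(\log n)^{C\log\log n}}{n} \;\leq\; C\,n^{-\epsilon}(\log n)^{C\log\log n},
\end{equation*}
which tends to zero. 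Hence on $\Omega_n$ the sub-grid oscillations of $X_n$ are negligible.

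Next, set $s_k := k\delta_n$ for $k = 0,1,\dots,K := \lceil 1/\delta_n\rceil$. For any triple $0 \leq i < j < k \leq K$, both $s_j - s_i$ and $s_k - s_j$ are at least $\delta_n$, so (\ref{7.8.9}) can be applied to each factor. The Cauchy--Schwarz inequality then yields
\begin{equation*}
\mathbb{E}\bigl[|X_n(s_j)-X_n(s_i)|^{2}\,|X_n(s_k)-X_n(s_j)|^{2}\bigr] \;\leq\; C\bigl((s_j-s_i)(s_k-s_j)\bigr)^{\alpha/2} \;\leq\; C(s_k-s_i)^{\alpha},
\end{equation*}
with $\alpha > 1$ (the degenerate cases $j\in\{i,k\}$ being trivial). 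This is precisely the hypothesis of Theorem 10.2 of \cite{Billingsley1968} applied with exponent $\gamma = 2$ and weights $u_k = \delta_n$, which for any block of consecutive grid points $s_{j_0}, \dots, s_{j_0+L}$ delivers
\begin{equation*}
\mathbb{P}\!\left(\max_{0\leq \ell \leq L}|X_n(s_{j_0+\ell}) - X_n(s_{j_0})| \geq \lambda\right) \;\leq\; \frac{K_0(L\delta_n)^{\alpha}}{\lambda^{4}}.
\end{equation*}

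To convert this into a statement about $w_{X_n}(\delta)$, fix $\delta > 0$ and take $n$ large enough that $\delta_n \leq \delta$. Partition $[0,1]$ into $\lceil 1/\delta\rceil$ subintervals of length at most $\delta$, each containing $O(\delta/\delta_n)$ grid points. Applying the maximal inequality with $\lambda = \varepsilon/6$ on each subinterval and union-bounding over subintervals gives
\begin{equation*}
\mathbb{P}\!\left(\max_{k}\max_{s_j \in [k\delta,(k+1)\delta]}|X_n(s_j) - X_n(k\delta)| \geq \varepsilon/6\right) \;\leq\; \frac{K_0\,\delta^{\alpha-1}}{(\varepsilon/6)^{4}},
\end{equation*}
which tends to $0$ as $\delta \to 0$ since $\alpha > 1$. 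For any $t_1, t_2$ with $|t_1 - t_2| < \delta$, the two points lie in at most two adjacent subintervals, so combining with the first step via a triangle inequality yields $|X_n(t_1) - X_n(t_2)| \leq 3(\varepsilon/6) + o(1) < \varepsilon$ on the intersection of $\Omega_n$ with the above good event. Choosing $\delta$ small enough that $K_0\delta^{\alpha-1}/(\varepsilon/6)^{4} < \eta/2$ delivers \textbf{(II)}.

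The main technical subtlety is the coupling of the two scales: the grid spacing $\delta_n = n^{-1/2-\epsilon}$ must sit exactly at the threshold above which (\ref{7.8.9}) becomes available, while remaining small enough that the delocalization-based estimate in the first step renders sub-grid oscillations vanishing. Once this balance is built into the choice of $\delta_n$, the only remaining ingredients are a routine application of Cauchy--Schwarz and the Billingsley maximal inequality applied in the standard partition-and-union-bound fashion.
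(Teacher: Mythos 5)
Your proof is correct and takes essentially the same approach as the paper: control the sub-grid oscillations of $X_n$ at scale $n^{-1/2-\epsilon}$ via the isotropic delocalization (\ref{p.6}), convert the single-increment moment bound (\ref{7.8.9}) into the product form required by Billingsley's maximal inequality via Cauchy--Schwarz, and union-bound over a fixed partition of $[0,1]$. The paper performs the same steps, merely organizing the union bound through (8.12) of \cite{Billingsley1968} with a local grid of $m=\lfloor n^{1/2+\epsilon/2}\rfloor$ points per $\delta$-interval and citing Theorem~12.2 together with the proof of Theorem~12.3 of \cite{Billingsley1968} (your ``Theorem~10.2'' appears to be a slip for the same result).
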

\begin{proof}
Note that by definition, we need to show that for any positive $\varepsilon$ and $\eta$, there exists a $\delta\in(0,1)$ and a sufficiently large $n_0$ such that for $n\geq n_0$
\begin{eqnarray*}
\mathbb{P}\left(\sup_{|t_2-t_1|\leq \delta}|X_n(t_2)-X_n(t_1)|\geq \varepsilon\right)\leq \eta.
\end{eqnarray*}
By the discussions in \cite{Billingsley1968} (see (8.12) of \cite{Billingsley1968}), it suffices to show that for $n\geq n_0$ and $0\leq t_1\leq 1$,
\begin{eqnarray*}
\frac{1}{\delta}\mathbb{P}\left(\sup_{t_1\leq t_2\leq t_1+\delta}|X_n(t_2)-X_n(t_1)|\geq \varepsilon/3\right)\leq \eta.
\end{eqnarray*}
Now we set
\begin{eqnarray*}
m=m(n):=\lfloor n^{1/2+\epsilon/2}\rfloor.
\end{eqnarray*}
Note that
\begin{eqnarray*}
0&\leq &\sup_{t_1\leq t_2\leq t_1+\delta}|X_n(t_2)-X_n(t_1)|-\max_{0\leq j\leq m}|X_n(t_1+\frac{j}{m}\delta)-X_n(t_1)|\nonumber\\
&\leq & C\sqrt{n}\max_{1\leq j\leq m}\sum_{i=\lfloor n(t_1+\frac{j-1}{m}\delta)\rfloor}^{\lfloor n(t_1+\frac{j}{m}\delta)\rfloor}(|y_i|^2+\frac1n)
\leq C\sqrt{n}\frac{n\delta}{m} \max_i (|y_i|^2+\frac1n)\leq C\delta n^{-\epsilon/4}
\end{eqnarray*}
with overwhelming probability for sufficiently large $n$. Here in the last inequality above we used the isotropic delocalization property (\ref{p.6}).
Thus it suffices to show that for $m=\lfloor n^{1/2+\epsilon/2}\rfloor$,
\begin{eqnarray}
\frac{1}{\delta}\mathbb{P}\left(\max_{0\leq j\leq m}|X_n(t_1+\frac{j}{m}\delta)-X_n(t_1)|\geq \varepsilon/4\right)\leq \eta/2. \label{4.5.6.7}
\end{eqnarray}
By Theorem 12.2 and the proof of Theorem 12.3 of \cite{Billingsley1968}, to obtain (\ref{4.5.6.7}), it suffices to show that for any $t_1,t_2\in[0,1]$ such that $|t_2-t_1|\geq m^{-1}\delta$  one has
\begin{eqnarray*}
\mathbb{E}\left(X_n(t_2)-X_n(t_1)\right)^4\leq C|t_2-t_1|^\alpha
\end{eqnarray*}
with some positive constants $C$ and $\alpha>1$. When $n$ is sufficiently large, by the definition of $m$, it suffices to have (\ref{7.8.9}) when $t_2-t_1\geq n^{-1/2-\epsilon}$. Thus we complete the proof.
\end{proof}

Below we will verify the condition (\ref{7.8.9}) of Lemma \ref{lem.3.4.5.6}  for $t_1, t_2\in[0,1]$ such that $t_2-t_1\geq n^{-1/2-\epsilon}$. At first, we construct a modified process as
\begin{eqnarray*}
Y_n(F_{sc}^{-1}(t))=\sqrt{\frac{\beta n}{2}}\sum_{i=1}^{nF_n(F_{sc}^{-1}(t))}(|y_i|^2-\frac1n).
\end{eqnarray*}
Here we specify $F_{sc}^{-1}(0)=-2$ and $F_{sc}^{-1}(1)=2$. Note that $Y_n(s), s\in[-2,2]$ is just $X_n(F_n(s))$ restricted on $[-2,2]$. Moreover,
\begin{eqnarray*}
X_n(t)=X_n(F_{sc}(F^{-1}_{sc}(t)))
,\quad t\in[0,1].\end{eqnarray*}
Then by using (\ref{8889}) and (\ref{p.6}) again one obtains that with overwhelming probability,
\begin{eqnarray}
\sup_{t\in[0,1]}|Y_n(F_{sc}^{-1}(t))-X_n(t)|&\leq & \sqrt{n}(\log n)^{C\log\log n} \max_{i}(|y_i|^2+\frac{1}{n})\nonumber\\
&\leq & \frac{(\log n)^{C'\log\log n}}{\sqrt{n}}.
 \label{p.8}
\end{eqnarray}
Moreover, we also have the definite bounds
\begin{eqnarray}
\sup_{t\in[0,1]}|X_n(t)|, |Y_n(F_{sc}^{-1}(t))|\leq \sqrt{n}\sum_{i=1}^n(|y_i|^2+\frac1n)=2\sqrt{n}. \label{p.9}
\end{eqnarray}
Then by combining (\ref{p.8}) and (\ref{p.9}), for $t_2-t_1\geq n^{-1/2-\epsilon}$ we have
\begin{eqnarray}
\mathbb{E}((Y_n(F_{sc}^{-1}(t_2))-Y_n(F_{sc}^{-1}(t_1)))-(X_n(t_2)-X_n(t_1)))^4\leq C(t_2-t_1)^2. \label{0.0.8}
\end{eqnarray}
Therefore, it suffices to show that for any $t_1, t_2\in[0,1]$ satisfying  $t_2-t_1\geq n^{-1/2-\epsilon}$, one has
\begin{eqnarray}
\mathbb{E}(Y_n(F_{sc}^{-1}(t_2))-Y_n(F_{sc}^{-1}(t_1))^4\leq
C(t_2-t_1)^{\alpha} \label{p.10}
\end{eqnarray}
with some positive  constants $C$ and $\alpha>1$.
Now set
\begin{eqnarray*}
s_1=F_{sc}^{-1}(t_1),\quad s_2=F_{sc}^{-1}(t_2).
\end{eqnarray*}
By the explicit formula of the semicircle law (\ref{p.0}), it is elementary to see that when $t_2-t_1\geq n^{-1/2-\epsilon}$, there exists $s_2-s_1\geq Cn^{-1/2-\epsilon}$ for some positive constant $C$. Actually, one has
\begin{eqnarray}
C(t_2-t_1)\leq s_2-s_1\leq C'(t_2-t_1)^{2/3}. \label{10.2}
\end{eqnarray}
holds uniformly in $s_1, s_2$ with some positive constants $C$ and $C'$ .
Thus it suffices to verify that when
\begin{eqnarray*}
s_2-s_1\geq Cn^{-1/2-\epsilon},
\end{eqnarray*}
one has
\begin{eqnarray}
\mathbb{E}(Y_n(s_2)-Y_n(s_1))^4\leq (s_2-s_1)^2\leq C'(t_2-t_1)^{4/3}. \label{p.11}
\end{eqnarray}
Then (\ref{0.0.8}) together with (\ref{p.11}) imply (\ref{p.10}) with $\alpha=4/3$.
Note that, by definition we have
\begin{eqnarray*}
Y_n(s)=\sqrt{\frac{\beta n}{2}}\sum_{i=1}^{nF_n(s)}(|y_i|^2-\frac1n)
=\sqrt{\frac{\beta n}{2}}\sum_{i=1}^{n}(|y_i|^2-\frac1n)\mathbf{1}_{\{\lambda_i\leq s \}}, \quad s\in[-2,2].
\end{eqnarray*}
Thus
\begin{eqnarray*}
Y_n(s_2)-Y_n(s_1)
=\sqrt{\frac{\beta n}{2}}\sum_{i=1}^{n}(|y_i|^2-\frac1n)\mathbf{1}_
{\{s_1<\lambda_i\leq s_2 \}},\quad s_1,s_2\in[-2,2].
\end{eqnarray*}

In the sequel, we will show (\ref{p.11}) by a Green function comparison strategy. To this end, at first, we will approximate the indicator functions $\mathbf{1}_{\{s_1<\lambda_i\leq s_2\}},i=1,\cdots,n$ by smooth functions expressed in terms of the Green function with the help of the following lemma.
\begin{lem} \label{lem.t.1}Under the assumptions of Theorem \ref{th.1}, for $\eta=n^{-1/2-\epsilon}(s_2-s_1)^{1/2}$ with some sufficiently small but fixed positive constant $\epsilon$,  when $s_2-s_1\geq Cn^{-1/2-\epsilon}$ with some  positive constant $C$, we have
\begin{eqnarray*}
\mathbb{E}\left(\sqrt{n}\sum_{i=1}^{n}(|y_i|^2-\frac1n)\left(\mathbf{1}_{\{s_1
<\lambda_i\leq s_2\}}-\frac{1}{\pi}\int_{s_1}^{s_2}\Im\frac{1}{\lambda_i-(E+\sqrt{-1}\eta)}dE\right)\right)^4\leq C'(s_2-s_1)^2.
\end{eqnarray*}
with some positive constant $C'$.
\end{lem}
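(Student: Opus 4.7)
The plan is to use the isotropic delocalization property (\ref{p.6}) together with the eigenvalue rigidity (\ref{7.7.7.7.7}) to bound the integrand pathwise on an overwhelming probability event $\Omega$, and handle the complement $\Omega^c$ by a crude deterministic bound. Set $q(\lambda):=\mathbf{1}_{\{s_1<\lambda\leq s_2\}}-\frac{1}{\pi}\int_{s_1}^{s_2}\Im\frac{1}{\lambda-(E+\sqrt{-1}\eta)}dE$. Evaluating the integral in closed form gives $\frac{1}{\pi}[\arctan((\lambda-s_1)/\eta)-\arctan((\lambda-s_2)/\eta)]$, from which I would deduce the pointwise estimate
\begin{eqnarray*}
|q(\lambda)|\leq \frac{C\eta}{|\lambda-s_1|+\eta}+\frac{C\eta}{|\lambda-s_2|+\eta},\qquad \lambda\in\mathbb{R}.
\end{eqnarray*}
Thus $|q|$ is $O(1)$ only on intervals of length $O(\eta)$ around $s_1$ and $s_2$, and decays like $\eta/|\lambda-s_j|$ away from them.

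On the event $\Omega$ on which both (\ref{p.6}) and (\ref{7.7.7.7.7}) hold, the isotropic delocalization yields $\bigl||y_i|^2-1/n\bigr|\leq (\log n)^{C\log\log n}/n$ uniformly in $i$, hence
\begin{eqnarray*}
\left|\sqrt{n}\sum_{i=1}^n \bigl(|y_i|^2-\tfrac{1}{n}\bigr)q(\lambda_i)\right|\leq \frac{(\log n)^{C\log\log n}}{\sqrt{n}}\sum_{i=1}^n|q(\lambda_i)|.
\end{eqnarray*}
The main technical step is controlling $\sum_{i=1}^n|q(\lambda_i)|$. I would decompose the index set dyadically according to the value of $|\lambda_i-s_j|$: in the band $\{2^{k-1}\eta<|\lambda_i-s_j|\leq 2^k\eta\}$ the summand is at most $2^{1-k}$, while rigidity bounds the cardinality of that band by $Cn\cdot 2^k\eta$ (with edge cases only improving the count because $\rho_{sc}$ vanishes there). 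Summing the resulting $O(n\eta)$ contributions over the $O(\log n)$ dyadic scales yields $\sum_i|q(\lambda_i)|\leq Cn\eta(\log n)^{C'\log\log n}$ on $\Omega$.

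Combining the two bounds, on $\Omega$ the quantity inside the fourth power is at most $(\log n)^{C''\log\log n}\sqrt{n}\eta=(\log n)^{C''\log\log n}n^{-\epsilon}(s_2-s_1)^{1/2}$. Since $(\log n)^{C''\log\log n}\leq n^{\epsilon}$ for $n$ large, raising to the fourth power gives the desired bound $C(s_2-s_1)^2$. The contribution to $\mathbb{E}(\cdot)^4$ from $\Omega^c$ is handled by the deterministic bounds $|y_i|^2\leq 1$ and $|q|\leq 2$, producing a crude bound of polynomial size in $n$ which, multiplied by $\mathbb{P}(\Omega^c)\leq n^{-K}$ for arbitrarily large $K$, is negligible compared to $(s_2-s_1)^2\geq Cn^{-1-2\epsilon}$.

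The main obstacle is the accurate bound on $\sum_i |q(\lambda_i)|$: although the pointwise bound on $q$ is elementary, combining it with rigidity requires the dyadic decomposition above because the typical rigidity fluctuation $(\log n)^{C\log\log n}n^{-2/3}$ can exceed $\eta$ in our parameter range, so one cannot simply replace $\lambda_i$ by $\gamma_i$ inside the singular kernel. The unavoidable logarithmic loss from summing over dyadic scales near $s_j$ is precisely what the small polynomial saving $n^{-\epsilon}$ built into the definition of $\eta$ is designed to absorb, leaving the clean bound $(s_2-s_1)^2$ with no polylogarithmic overhead.
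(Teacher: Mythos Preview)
Your proof is correct and follows the same skeleton as the paper's: use isotropic delocalization to reduce to controlling $\sum_{i}|q(\lambda_i)|$, compute the Poisson integral explicitly via $\arctan$ to obtain the pointwise decay $|q(\lambda)|\lesssim \eta/(|\lambda-s_j|+\eta)$, and then count eigenvalues in bands around $s_1,s_2$ using the local semicircle law (what you call ``rigidity'' here is really the eigenvalue-counting estimate $N_n(I)=O(n|I|)$, though rigidity would also suffice since the smallest band has width $\eta\gg n^{-1}(\log n)^{C\log\log n}$). The one genuine difference is in the band decomposition: the paper introduces an auxiliary scale $\theta=n^{-1/2-\epsilon/2}(s_2-s_1)^{1/2}\gg\eta$, uses the crude bound $|q|\leq 2$ on the $\theta$-neighborhoods $\mathbf{L}_2$ of $s_1,s_2$, and partitions the complement $\mathbf{L}_1$ into equal-length intervals of size $n^{-1+c}$, obtaining $\sum_i|q(\lambda_i)|\leq C(n\eta\log n+n\theta)$ with the $n\theta$ term dominant. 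Your dyadic decomposition treats all scales uniformly, avoids the artificial scale $\theta$, and yields the sharper bound $\sum_i|q(\lambda_i)|\leq Cn\eta\,(\log n)^{O(1)}$; both estimates are comfortably absorbed by the $n^{-\epsilon}$ saving built into $\eta$, so the extra sharpness is not needed, but your packaging is cleaner.
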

\begin{proof} By the isotropic delocalization property (\ref{p.6}) and the definite bound $|y_i|^2\leq 1$ we see that it suffices to show for some sufficiently small constant $\epsilon>0$,
\begin{eqnarray}
n^{-2+\epsilon}\mathbb{E}\left(\sum_{i=1}^{n}\left|\mathbf{1}_{\{s_1<\lambda_i\leq s_2\}}-\frac{1}{\pi}\int_{s_1}^{s_2}\Im\frac{1}{\lambda_i-(E+\sqrt{-1}\eta)}\right|\right)^4\leq C'(s_2-s_1)^{2}\label{t.2}
\end{eqnarray}

Now we choose
\begin{eqnarray}
\theta:=n^{-1/2-\epsilon/2}(s_2-s_1)^{1/2}\gg \eta\geq Cn^{-\frac{3}{4}-\frac{3\epsilon}{2}}. \label{p.14}
\end{eqnarray}
Observe that both $\eta$ and $\theta$ are much less than $ s_2-s_1$. Now we split the real line into $\mathbb{R}=\mathbf{L}_1\cup\mathbf{L}_2$, where
\begin{eqnarray*}
\mathbf{L}_1=(-\infty, s_1-\theta)\cup(s_1+\theta, s_2-\theta)\cup(s_2+\theta, \infty),\quad \mathbf{L}_2=\mathbb{R}\setminus\mathbf{L}_1.
\end{eqnarray*}
We will show that when $\lambda_i\in \mathbf{L}_1$, one has
\begin{eqnarray}
\left|\mathbf{1}_{\{s_1<\lambda_i\leq s_2\}}-\frac{1}{\pi}\int_{s_1}^{s_2}\Im\frac{1}{\lambda_i-(E+\sqrt{-1}\eta)}dE\right|\leq C\eta(\frac{1}{|\lambda_i-s_1|}+\frac{1}{|\lambda_i-s_2|}).\label{t.1}
\end{eqnarray}
To see (\ref{t.1}), we use the following elementary fact.
\begin{eqnarray*}
\frac{1}{\pi}\int_{s_1}^{s_2}\Im\frac{1}{\lambda_i-(E+\sqrt{-1}\eta)}dE
&=&\frac{1}{\pi}\int_{s_1}^{s_2}\frac{\eta}{(\lambda_i-E)^2+\eta^2}dE\nonumber\\
&=&\frac{1}{\pi}(\arctan\frac{s_2-\lambda_i}{\eta}-\arctan\frac{s_1-\lambda_i}{\eta}).
\end{eqnarray*}
Note that when $\lambda_i\in \mathbf{L}_1$, then one has
\begin{eqnarray*}
|\lambda_i-s_2|,|\lambda_i-s_1|\geq\theta\gg\eta.
\end{eqnarray*}
By the basic asymptotic properties of $\arctan(x)$ one has for $\lambda_i\in\mathbf{L}_1$,
\begin{eqnarray*}
\left|\mathbf{1}_{\{s_1<\lambda_i\leq s_2\}}-\frac{1}{\pi}(\arctan\frac{s_2-\lambda_i}{\eta}-\arctan\frac{s_1-\lambda_i}{\eta})\right|\leq C\eta\left(\frac{1}{|\lambda_i-s_1|}+\frac{1}{|\lambda_i-s_2|}\right)
\end{eqnarray*}
with some positive constant $C$.
Let $N_n(I)$ be the number of the eigenvalues falling into the region $I\in \mathbb{R}$. Then we have
\begin{eqnarray}
&&\sum_{i=1}^{n}\left|\mathbf{1}_{\{s_1<\lambda_i\leq s_2\}}-\frac{1}{\pi}\int_{s_1}^{s_2}\Im\frac{1}{\lambda_i-(E+i\eta)}\right|\nonumber\\
&\leq &C\left(\eta\sum_{i:\lambda_i\in\mathbf{L_1}}(\frac{1}{|\lambda_i-s_1|}+\frac{1}{|\lambda_i-s_2|})+N_n(\mathbf{L}_2)\right)\label{t.3}
\end{eqnarray}
Now we use the so-called local semicircle law (for instance, see Theorem 1.8 of \cite{TV2010}) in the sense that for any interval $I\in\mathbb{R}$ with its length $|I|\geq n^{-1+c}$ for any sufficiently small but fixed constant $c>0$,
\begin{eqnarray}
N_n(I)=O(n|I|) \label{p.12}
\end{eqnarray}
with overwhelming probability when $n$ is sufficiently large. Now we decompose the real line as \begin{eqnarray*}
\mathbb{R}=(-\infty, -5)\cup\bigg{(}\bigcup_{k=1}
^{K_n}I_k\bigg{)}
\cup(5,+\infty),
\end{eqnarray*}
where $K_n=O(n^{1-c})$ and
\begin{eqnarray*}
I_k=[-5+(k-1)n^{-1+c},-5+kn^{-1+c}].
\end{eqnarray*}
Here we can choose $K_n$ such that
\begin{eqnarray*}
-5+(K_n-1)n^{-1+c}<5,\quad -5+K_nn^{-1+c}\geq 5.
\end{eqnarray*}
We will show that
\begin{eqnarray}
\sum_{i:\lambda_i\in\mathbf{L_1}}(\frac{1}{|\lambda_i-s_1|}+\frac{1}{|\lambda_i-s_2|}) \leq Cn\log  n \label{p.15}
\end{eqnarray}
with overwhelming probability. At first, by the rigidity property provided in \cite{EYY2012} we see that all the eigenvalues of $M_n$ are in $[-5,5]$ with overwhelming probability. Thus it suffices to show with overwhelming probability,
\begin{eqnarray*}
\sum_{i:\lambda_i\in\mathbf{L_1}\cap [-5,5]}(\frac{1}{|\lambda_i-s_1|}+\frac{1}{|\lambda_i-s_2|})
&\leq & \sum_{k=1}^{K_n}\sum_{i: \lambda_i\in \mathbf{L}_1\cap I_k}(\frac{1}{|\lambda_i-s_1|}+\frac{1}{|\lambda_i-s_2|}) \nonumber\\
&\leq &C \sum_{k=1}^{K_n}\sum_{i: \lambda_i\in \mathbf{L}_1\cap I_k} \frac{1}{\theta+(k-1)n^{-1+c}}\nonumber\\
&\leq & C\sum_{k=1}^{K_n} \frac{n|I_k|}{\theta+(k-1)n^{-1+c}}\nonumber\\
&\leq & Cn\log n.
\end{eqnarray*}
Here in the third step we have used (\ref{p.12}). Moreover, by (\ref{p.14}) and (\ref{p.12}) we also have
\begin{eqnarray}
N_n(\mathbf{L}_2)\leq Cn\theta \label{p.16}
\end{eqnarray}
with overwhelming probability for some positive constant $C$. Combining (\ref{t.3}), (\ref{p.15}) and (\ref{p.16})
 we have
\begin{eqnarray}
\sum_{i=1}^{n}\left|\mathbf{1}_{\{s_1<\lambda_i\leq s_2\}}-\frac{1}{\pi}\int_{s_1}^{s_2}\Im\frac{1}{\lambda_i-(E+\sqrt{-1}\eta)}\right|
\leq C(\eta n\log n+n\theta) \label{p.17}
\end{eqnarray}
with overwhelming probability. Now by noticing that the left hand side of (\ref{p.17}) is bounded by $2n$ definitely, we also have
\begin{eqnarray}
\mathbb{E}\left(\sum_{i=1}^{n}\left|\mathbf{1}_{\{s_1<\lambda_i\leq s_2\}}-\frac{1}{\pi}\int_{s_1}^{s_2}\Im\frac{1}{\lambda_i-(E+\sqrt{-1}\eta)}\right|\right)^4\leq  C(\eta n\log n+n\theta)^ 4. \label{p.18}
\end{eqnarray}
Then by (\ref{p.18}) and the definitions of $\theta$ and $\eta$ we can see (\ref{t.2}). Thus we complete the proof.
\end{proof}
Note that
\begin{eqnarray}
&&\sum_{i=1}^{n}(|y_i|^2-\frac1n)\frac{1}{\pi}\int_{s_1}^{s_2}\Im\frac{1}{\lambda_i-(E+\sqrt{-1}\eta)}dE\nonumber\\
&&=\frac{1}{\pi}\int_{s_1}^{s_2}\Im(\mathbf{x}^*G(z)\mathbf{x}-\frac1n trG(z))dE \label{9999}
\end{eqnarray}
For simplicity, we will use the notation in \cite{KY2012} to write $A_{\mathbf{v}\mathbf{w}}=\mathbf{v}^*
A\mathbf{w}$ for any matrix $A$. Particularly, $A_{\mathbf{v}\mathbf{e}_i}$ and $A_{\mathbf{e}_i\mathbf{v}}$ will be simply denoted by $A_{\mathbf{v}i}$ and $A_{i\mathbf{v}}$ in the sequel.

Then with the aid of Lemma \ref{lem.t.1} and (\ref{9999}), it suffices to prove the following lemma.
\begin{lem}  \label{lem.t.0}Let $z=E+\sqrt{-1}\eta $ with $\eta=n^{-1/2-\epsilon}(s_2-s_1)^{1/2}$, where $\epsilon$ is some sufficiently small but fixed positive constant. And we assume that $s_1,s_2\in[-2,2]$ such that $s_2-s_1\geq n^{-1/2-\epsilon}$. Under the assumptions of Theorem \ref{th.1}, one has
\begin{eqnarray}
\mathbb{E}\left(\sqrt{n}\int_{s_1}^{s_2}\Im(G_{\mathbf{x}\mathbf{x}}(z)-\frac1n trG(z))dE\right)^4\leq C(s_2-s_1)^2. \label{t.4}
\end{eqnarray}
\end{lem}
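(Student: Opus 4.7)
The plan is to establish the fourth-moment bound by a Green function comparison argument, reducing the general Wigner case to the Gaussian reference (GOE/GUE) where the Haar distribution of the eigenvectors makes the desired estimate explicit. First, I expand the fourth power as a quadruple integral
\[
\mathbb{E}\left(\sqrt{n}\int_{s_1}^{s_2}\Phi(E)\,dE\right)^{4}=n^{2}\int_{[s_1,s_2]^4}\mathbb{E}\bigl[\Phi(E_1)\Phi(E_2)\Phi(E_3)\Phi(E_4)\bigr]\,dE_1dE_2dE_3dE_4,
\]
where $\Phi(E):=\Im\bigl(G_{\mathbf{x}\mathbf{x}}(E+\sqrt{-1}\eta)-\frac{1}{n}\mathrm{tr}\,G(E+\sqrt{-1}\eta)\bigr)$, so it suffices to control the four-point correlator inside the integral. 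The integration domain already furnishes a factor of $(s_2-s_1)^{4}$, so the task is to extract a compensating $n^{-2}(s_2-s_1)^{-2}$ from the correlator itself.

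For the Gaussian reference $M_n\in\{\mathrm{GOE},\mathrm{GUE}\}$, conditionally on the eigenvalues the vector $\mathbf{y}=U_n^*\mathbf{x}$ is uniformly distributed on the unit sphere, and one may write
\[
\Phi(E)=\sum_{i=1}^{n}\bigl(|y_i|^2-\tfrac{1}{n}\bigr)P_i(E),\qquad P_i(E):=\frac{\eta}{(\lambda_i-E)^2+\eta^2}.
\]
Expanding the correlator as a sum over index quadruples of the mixed moments $\mathbb{E}\prod_{k=1}^{4}(|y_{i_k}|^{2}-1/n)$, Dirichlet-type identities for the uniform measure on $S^{n-1}$ (resp.\ $S^{2n-1}$) show that these moments vanish unless the indices pair up and are then of order $n^{-4}$. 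Interchanging sum and integral and using the elementary bound $\int_{s_1}^{s_2}P_i(E)\,dE\leq\pi$, the paired contributions are bounded by $Cn^{-4}\cdot N_n([s_1,s_2])^{2}=O(n^{-2}(s_2-s_1)^{2})$, where the eigenvalue rigidity of \cite{EYY2012} is used to estimate $N_n([s_1,s_2])=O(n(s_2-s_1))$; multiplying by the prefactor $n^{2}$ yields the Gaussian bound $C(s_2-s_1)^{2}$.

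To transfer this estimate to general Wigner matrices I would apply the Lindeberg-type Green function comparison scheme of Erd\H{o}s--Yau--Yin \cite{EYY2010}: order the $O(n^{2})$ independent matrix entries and swap each one in turn for the corresponding Gaussian entry, Taylor expanding the integrand $\Phi(E_1)\cdots\Phi(E_4)$ to fifth order in the single entry $v_{ij}/\sqrt{n}$. Four-moment matching cancels the contribution of the first five Taylor terms to the expected difference, and the fifth-order remainder is $O(n^{-5/2})$ times derivatives of $G$ that expand into products of Green function entries such as $G_{\mathbf{x}i}$, $G_{ij}$ and $G_{i\mathbf{x}}$. These entries are controlled on the spectral domain $\mathbf{S}$ by the isotropic local semicircle law (\ref{p.5}), together with the assumption $\|\mathbf{x}\|_\infty\to 0$ that provides extra smallness for $G_{\mathbf{x}i}$, giving bounds of the form $(\log n)^{C\log\log n}\bigl((n\eta)^{-1/2}+(n\eta)^{-1}+o(1)\bigr)$. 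The main obstacle I anticipate is precisely this step: the relevant scale $\eta\geq Cn^{-3/4-3\epsilon/2}$ is essentially the smallest one on which the isotropic law operates, so the negative powers of $\eta$ accumulated from the fifth derivatives are large, and a careful accounting is required to verify that, after summing over the $O(n^{2})$ replacements and incorporating the prefactor $n^{2}(s_2-s_1)^{4}$ from the quadruple integral, the total comparison error is still dominated by $C(s_2-s_1)^{2}$. Combined with the Gaussian bound from the previous step, this would yield (\ref{t.4}).
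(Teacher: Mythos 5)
Your proposal follows the same broad architecture as the paper's proof: establish the Gaussian (GOE/GUE) case directly from the sphere-uniformity of $\mathbf{y}$, then transfer to general Wigner by a Lindeberg-type Green function comparison with fifth-order Taylor remainders, using four-moment matching and the isotropic local semicircle law. However, there is a genuine gap at the step you yourself flag as the "main obstacle." A single pass of the comparison is \emph{not} strong enough. If one inserts only the deterministic-with-overwhelming-probability bound $|\mathcal{F}_0|\lesssim n^{O(\epsilon)}\Psi(z)(s_2-s_1)$ coming from (\ref{p.5}), the dominant $\kappa=5$ terms (e.g.\ $\{k_i\}=\{0,0,0,5\}$) contribute, after summing over the $O(n^2)$ entries, something of order $n^{-7/4+O(\epsilon)}(s_2-s_1)^2$ to $\mathbb{E}(\int\Phi\,dE)^4$, which exceeds the required $n^{-2}(s_2-s_1)^2$ by a factor $n^{1/4+O(\epsilon)}$. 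The paper resolves this with a two-round bootstrap: it first runs a crude comparison (its Lemma \ref{lem.t.3}) to derive the intermediate fourth-moment estimate $\mathbb{E}(\Im\mathcal{F}_0)^4\leq n^{-3/2+O(\epsilon)}(s_2-s_1)^2$, valid for any Wigner matrix in the class, and only then reruns the comparison with this improved moment bound on $\mathcal{F}_0$ (via H\"older) instead of the crude overwhelming-probability bound. That extra factor $n^{-1/2}$ is exactly what pushes the $\kappa=5,6$ contributions below $n^{-2}(s_2-s_1)^2$. Your proposal never identifies the need for this self-improvement, and without it the argument does not close.

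Two smaller inaccuracies: the mixed sphere moments $\mathbb{E}\prod_k(|y_{i_k}|^2-1/n)$ do \emph{not} vanish for unpaired index quadruples; they are merely $O(n^{-5})$ or $O(n^{-6})$ (as in (\ref{p.166}), (\ref{p.188})), obtained from the constraint $\sum_i(|y_i|^2-1/n)=0$ and exchangeability, and this quantitative smallness, not exact cancellation, is what makes the Gaussian step work. Also, the decisive gain in summing the comparison error over entries $(a,b)$ comes from $\sum_a|x_a|^2=1$ (and $\sum_a|x_a|=O(\sqrt n)$) inserted through the $|x_a|,|x_b|$-dependence of the isotropic bound (\ref{p.20}); the hypothesis $\|\mathbf{x}\|_\infty\to 0$ plays no role in this lemma and does not by itself supply "extra smallness" for $G_{\mathbf{x}i}$.
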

\begin{proof} Below, we will focus on the real case for simplicity. The proof for the complex case is just analogous.
Let
\begin{eqnarray*}
\widetilde{M}_n=\frac{1}{\sqrt{n}}(\tilde{v}_{ij})_{n,n}
\end{eqnarray*}
be GOE and $\widetilde{G}(z)$ be its corresponding Green function. At first, we will show that (\ref{t.4}) holds for GOE. Note that by Lemma \ref{lem.t.1} we can go back to the original quantity to show
\begin{eqnarray}
\mathbb{E}(Y_n(s_2)-Y_n(s_1))^4\leq C(s_2-s_1)^2.
\end{eqnarray}
By (\ref{10.2}) it suffices to show that
\begin{eqnarray*}
\mathbb{E}\left(\sqrt{n}\sum_{i=1}^n(|y_i|^2-\frac1n)\mathbf{1}_{\{s_1< \lambda_i\leq s_2\}}\right)^4\leq C(t_2-t_1)^2.
\end{eqnarray*}
Note that for the Gaussian case, $\mathbf{y}$ is uniformly distributed on $S^{n-1}$.
By using (\ref{8889}) and the isotropic delocalization property (\ref{p.6}) again,  we see that it suffices to verify
\begin{eqnarray}
\mathbb{E}\left(\sqrt{n}\sum_{i=\lfloor nt_1\rfloor}^{\lfloor nt_2\rfloor}(|y_i|^2-\frac1n)\right)^4\leq C(t_2-t_1)^2. \label{10.3}
\end{eqnarray}
Recall the fact
\begin{eqnarray*}
\mathbf{y}\xlongequal{d}\frac{\mathbf{g}_{\mathbb{R}}}{||\mathbf{g}_{\mathbb{R}}||}.
\end{eqnarray*}
Here $\mathbf{g}_{\mathbb{R}}=(g_1,\cdots,g_n)^T$ is the $n\times 1$ random vector with i.i.d $N(0,1)$ coefficients. Note that it is elementary to see
\begin{eqnarray}
\mathbb{E}|y_i^2|^m=\mathbb{E}{\frac{g_i^{2m}}{||\mathbf{g}_\mathbb{R}||^{2m}}}=O(n^{-m}) \label{p.177}
\end{eqnarray}
for any given integer $m\geq 0$.
Moreover, we have the following lemma.
\begin{lem}Assume that $\mathbf{y}=(y_1,\cdots, y_n)^T$ is uniformly distributed on $S^{n-1}$. For any $i,j,k,l$ different from each other,
\begin{eqnarray}
\mathbb{E}(y_i^2-\frac{1}{n})(y_j^2-\frac1n)(y_k^2-\frac1n)^2=O(n^{-5}),\label{p.166}
\end{eqnarray}
\begin{eqnarray}
\mathbb{E}(y_i^2-\frac1n)(y_j^2-\frac1n)(y_k^2-\frac1n)(y_l^2-\frac1n)=O(n^{-6}). \label{p.188}
\end{eqnarray}
\end{lem}
\begin{proof} Note that we always have
\begin{eqnarray*}
\sum_{i=1}^n(y_i^2-\frac1n)=0.
\end{eqnarray*}
Therefore, we have
\begin{eqnarray}
0&=&\mathbb{E}(\sum_{i=1}^n(y_i^2-\frac1n))(\sum_{j=1}^n(y_j^2-\frac1n))(\sum_{k=1}^n(y_k^2-\frac1n)^2)\nonumber\\
&=&\sum_{i,k}\mathbb{E}(y_i^2-\frac1n)^2(y_k^2-\frac1n)^2+2\sum_{i\neq k}\mathbb{E}(y_i^2-\frac1n)(y_k^2-\frac1n)^3\nonumber\\
&&+\sum_{i,j,k\text{ are mutually distinct}}\mathbb{E}(y_i^2-\frac1n)(y_j^2-\frac1n)(y_k^2-\frac1n)^2\nonumber\\
&=&\sum_{i,j,k\text{ are mutually distinct}}\mathbb{E}(y_i^2-\frac1n)(y_j^2-\frac1n)(y_k^2-\frac1n)^2+O(n^{-2}), \label{p.19}
\end{eqnarray}
where the last step follows from (\ref{p.177}) directly. Now note that $\mathbf{y}$ is an exchangeable random vector. Thus by symmetry, we see that every term in the summation in (\ref{p.19}) is the same, thus (\ref{p.166}) follows. (\ref{p.188}) can be verified similarly. Note
\begin{eqnarray*}
0&=&\mathbb{E}(\sum_{i=1}^n(y_i^2-\frac1n))(\sum_{j=1}^n(y_j^2-\frac1n))(\sum_{k=1}^n(y_k^2-\frac1n))(\sum_{l=1}^n(y_l^2-\frac1n))\nonumber\\
&=& \sum_{i}
\mathbb{E}(y_i^2-\frac1n)^4 +4\sum_{i\neq j}
\mathbb{E}(y_i^2-\frac1n)^3(y_j^2-\frac1n)\nonumber\\
&&+3\sum_{i\neq j}
\mathbb{E}(y_i^2-\frac1n)^2(y_j^2-\frac1n)^2\nonumber\\
&&+6\sum_{
i,j,k\text{ are mutually distinct}}\mathbb{E}(y_i^2-\frac1n)(y_j^2-\frac1n)(y_k^2-\frac1n)^2\nonumber\\
&&+\sum_{
i,j,k,l\text{ are mutually distinct}}
\mathbb{E}(y_i^2-\frac1n)(y_j^2-\frac1n)(y_k^2-\frac1n)(y_l^2-\frac1n)\nonumber\\
&&=\sum_{
i,j,k,l\text{ are mutually distinct}}
\mathbb{E}(y_i^2-\frac1n)(y_j^2-\frac1n)(y_k^2-\frac1n)(y_l^2-\frac1n)+O(n^{-2}),
\end{eqnarray*}
where the last step follows from (\ref{p.177}) and (\ref{p.166}). Now again by symmetry, we can get (\ref{p.188}). So we complete the proof.
\end{proof}
Now by using (\ref{p.177}), (\ref{p.166}) and (\ref{p.188}) we have
\begin{eqnarray*}
&&\mathbb{E}\left(\sum_{i=\lfloor nt_1\rfloor}^{\lfloor nt_2\rfloor}(y_i^2-\frac1n)\right)^4\nonumber\\
&=&\sum_{i=
\lfloor nt_1\rfloor}^{\lfloor nt_2\rfloor}\mathbb{E}(y_i^2-\frac1n)^4+4\sum_{i\neq j=
\lfloor nt_1\rfloor}^{\lfloor nt_2\rfloor}
\mathbb{E}(y_i^2-\frac1n)^3(y_j^2-\frac1n)\nonumber\\
&&+3\sum_{i\neq j=
\lfloor nt_1\rfloor}^{\lfloor nt_2\rfloor}
\mathbb{E}(y_i^2-\frac1n)^2(y_j^2-\frac1n)^2\nonumber\\
&&+6\sum_{\substack{i,j,k=\lfloor nt_1\rfloor\\
i,j,k\text{ are mutually distinct}}}^{\lfloor nt_2\rfloor}\mathbb{E}(y_i^2-\frac1n)(y_j^2-\frac1n)(y_k^2-\frac1n)^2\nonumber\\
&&+\sum_{\substack{i,j,k,l=
\lfloor nt_1\rfloor\\
i,j,k,l\text{ are mutually distinct}}}^{\lfloor nt_2\rfloor}
\mathbb{E}(y_i^2-\frac1n)(y_j^2-\frac1n)(y_k^2-\frac1n)(y_l^2-\frac1n)\nonumber\\
&\leq &Cn^{-2}(t_2-t_1)^2,
\end{eqnarray*}
which implies (\ref{10.3}) for GOE. Moreover, by the discussions above, (\ref{t.4}) for the Gaussian case follows.

Therefore, it remains to compare the general case with the Gaussian case. That is to say,
we only need to show that
\begin{eqnarray}
&&\bigg{|}\mathbb{E}\left(\int_{s_1}^{s_2}\Im(G_{\mathbf{x}\mathbf{x}}(z)-\frac1n trG(z))dE\right)^4-\mathbb{E}\left(\int_{s_1}^{s_2}\Im(\widetilde{G}_{\mathbf{x}\mathbf{x}}(z)-\frac1n tr\widetilde{G}(z))dE\right)^4\bigg{|}\nonumber\\
&&\leq Cn^{-2}(s_2-s_1)^2. \label{10.4}
\end{eqnarray}

To simplify the discussions in the sequel, we truncate the matrix elements of $M_n$ and $\widetilde{M}_n$ at $n^{\epsilon}$, where $\epsilon>0$ is the small constant chosen in Lemma \ref{lem.t.0}. By Condition \ref{con.1}, we see that the truncated matrices coincide with the original ones with overwhelming probabilities. Thus their corresponding Green functions also equal to the original ones with overwhelming probabilities. Following from this fact and the definite bounds
\begin{eqnarray*}
||G(z)||_{op},||\widetilde{G}(z)||_{op}\leq \eta^{-1},
\end{eqnarray*}
we see it suffices to prove (\ref{10.4}) for the truncated matrices. Therefore, below we will make the additional assumption of
\begin{eqnarray*}
\max_{ij}|v_{i,j}|\leq n^{\epsilon},\quad \max_{ij}|\tilde{v}_{i,j}|\leq n^{\epsilon}.
\end{eqnarray*}
Note that the truncation may change the first four moments of the original elements by tiny amounts. Actually, such minor changes are smaller than $n^{-K}$ with any positive constant $K$ when $n$ is sufficiently large. It will be clear that such small changes on the moments of elements do not affect our comparison procedure. So for simplicity, we will still regard that the two truncated matrices matches to the first four moments. Moreover,  note that all the results needed from the references such as \cite{EYY2012} and \cite{KY2012} hold with overwhelming probabilities for the original matrices, while the truncated matrices equal to the original ones with overwhelming probabilities, thus the results from these references are still valid for the truncated matrices.

The main idea to show (\ref{10.4}) is a Green function comparison strategy based on the discussions in \cite{KY2012}. To pursue this approach, we need to introduce some notation. At first we assign a bijective ordering map $\phi$ on the index set of the matrix elements,
\begin{eqnarray*}
\phi: \{(i,j):1\leq i\leq j\leq n\}\rightarrow
\{1,\cdots,\frac{n(n+1)}{2}\}.
\end{eqnarray*}
For $1\leq \gamma\leq n(n+1)/2$, we define the matrix $M_n^{\gamma}$ to be the Wigner matrix with its $(i,j)$ element being $v_{ij}/\sqrt{n}$ if $\phi(i,j)\leq \gamma$ or $\tilde{v}_{ij}/\sqrt{n}$ otherwise. Correspondingly, we denote the Green function of $M_n^\gamma$ by $G^{\gamma}(z)$. Thus it suffices to estimate the one step difference
\begin{eqnarray*}
\bigg{|}\mathbb{E}\left(\int_{s_1}^{s_2}\Im(G_{\mathbf{x}\mathbf{x}}^{\gamma}(z)-\frac1n trG^{\gamma}(z))dE\right)^4-\mathbb{E}\left(\int_{s_1}^{s_2}\Im(G_{\mathbf{x}\mathbf{x}}^{\gamma-1}(z)-\frac1n tr{G}^{\gamma-1}(z))dE\right)^4\bigg{|}
\end{eqnarray*}
and in the end we will use a telescoping argument to sum up all these one step differences to obtain (\ref{10.4}).

Now without loss of generality, we assume that $\gamma=\phi(a,b)$. Thus $M_n^{\gamma}$ and $M_n^{\gamma-1}$ only differ in the $(a,b)$ and $(b,a)$-th elements. Let
\begin{eqnarray*}
E^{ab}=\mathbf{e}_a\mathbf{e}_b^*.
\end{eqnarray*}
Then we can write
\begin{eqnarray*}
M_n^{\gamma}=Q+n^{-1/2}V,\quad M_n^{\gamma-1}=Q+n^{-1/2}\widetilde{V},
\end{eqnarray*}
where
\begin{eqnarray*}
&&V=(1-\delta_{ab}/2)(v_{ab}E^{ab}+v_{ba}E^{ba}),\nonumber\\
&&\widetilde{V}=(1-\delta_{ab}/2)(\tilde{v}_{ab}E^{ab}+\tilde{v}_{ba}E^{ba}),
\end{eqnarray*}
and then $Q$ is a random matrix independent of $v_{ab}$ and $\tilde{v}_{ab}$.
Let
\begin{eqnarray*}
R(z):=(Q-z)^{-1}.
\end{eqnarray*}
Moreover, for simplicity, we rewrite $G^{\gamma-1}(z)$ and $G^{\gamma}(z)$ as $S(z)$ and $T(z)$ respectively.
And when there is no confusion, we will omit the variable $z$ from the above notation. Now
by the resolvent expansion, one has
\begin{eqnarray}
S=R+\sum_{k=1}^{4}n^{-k/2}(-RV)^kR+n^{-5/2}(-RV)^5S.\label{t.7}
\end{eqnarray}
 Then we can write
\begin{eqnarray*}
&&\mathbb{E}\left(\int_{s_1}^{s_2}\Im(S_{\mathbf{x}
\mathbf{x}}(z)-n^{-1}trS(z))dE
\right)^4\nonumber\\
&=&\mathbb{E}\left(\int_{s_1}^{s_2}\Im\bigg{
(}(R_{\mathbf{x}\mathbf{x}}(z)-n^{-1}trR(z))\right.\nonumber\\
&&+\sum_{k=1}^{4}n^{-k/2}([(-R(z)V)^kR(z)]_{\mathbf{x}\mathbf{x}}-n^{-1}tr(-R(z)V)^kR(z))\nonumber\\
&&+\left.n^{-5/2}([(-R(z)V)^5S(z)]_{\mathbf{x}\mathbf{x}}-n^{-1}tr(-R(z)V)^5S(z))\bigg{)}dE\right)^4\nonumber\\
&=:&\mathbb{E}(\Im(\mathcal{F}_0
+\sum_{k=1}^{4}n^{-k/2}\mathcal{F}_k+n^{-5/2}\mathcal{F}_5))^4.
\end{eqnarray*}
Here $\mathcal{F}_i:=\mathcal{F}_i(a,b),i=0\cdots,5$ whose definitions are given by
\begin{eqnarray*}
&&\mathcal{F}_0:=\int_{s_1}^{s_2}(R_{\mathbf{x}\mathbf{x}}(z)-n^{-1}trR(z))dE,\nonumber\\
&&\mathcal{F}_k:=\int_{s_1}^{s_2}([(-R(z)V)^kR(z)]_{\mathbf{x}\mathbf{x}}-n^{-1}tr(-R(z)V)^kR(z))dE,\quad k=1,\cdots, 4,\nonumber\\
&&\mathcal{F}_5:=\int_{s_1}^{s_2}([(-R(z)V)^kS(z)]_{\mathbf{x}\mathbf{x}}-n^{-1}tr(-R(z)V)^kS(z))dE.
\end{eqnarray*}

Observe that in the real case, every $[(RV)^kR]_{\mathbf{x}\mathbf{x}}$ can be written as a summation of the terms in the form of
\begin{eqnarray*}
(v_{ab})^{k}\mathbf{q}_{k,a,b}(R,\mathbf{x}),
\end{eqnarray*}
where $\mathbf{q}_{k,a,b}(R,\mathbf{x})$ is some product of the factors $R_{\mathbf{x}a}$, $R_{a\mathbf{x}}$, $R_{\mathbf{x}b}$, $R_{b\mathbf{x}}$, $R_{aa}$, $R_{ab}$, $R_{ba}$ and $R_{bb}$. We remind here in the complex case, $(v_{ab})^k$ should be replaced by $(v_{ab})^{k_1}(v_{ba})^{k_2}$ with $k_1+k_2=k$. Moreover, the total number  of the factors $R_{\mathbf{x}a}$, $R_{a\mathbf{x}}$, $R_{\mathbf{x}b}$, $R_{b\mathbf{x}}$ in every $\mathbf{q}_{k,a,b}(R,\mathbf{x})$ is $2$. For example,
\begin{eqnarray*}
[(RV)^2R]_{\mathbf{x}\mathbf{x}}=(v_{ab})^2\left(R_{\mathbf{x}a}R_{bb}R_{a\mathbf{x}}
+R_{\mathbf{x}b}R_{ab}R_{a\mathbf{x}}
+R_{\mathbf{x}a}R_{ba}R_{b\mathbf{x}}+
R_{\mathbf{x}b}R_{aa}R_{b\mathbf{x}}\right).
\end{eqnarray*}

In the following Lemma \ref{lem.t.2} we will give some crude bounds for the quantities $\mathcal{F}_k$. These bounds will be used to provide a crude bound of
\begin{eqnarray}
\mathbb{E}\left(\int_{s_1}^{s_2}\Im(G_{\mathbf{x}\mathbf{x}}(z)-\frac{1}{n}trG(z))dE\right)^4 \label{1010}
\end{eqnarray}
through a comparison procedure. Then the crude bound for (\ref{1010}) will imply an improved bound for $\mathcal{F}_0$. Such an improved bound combined with another round of comparison can help us to obtain a good bound for (\ref{1010}). In other words, our main route in the sequel is to use a ``bootstrap'' strategy to get a crude bound of (\ref{1010}) at first and then use the crude bound to get the final bound in Lemma \ref{lem.t.0}. For ease of presentation, we will use the notation in \cite{KY2012} to set
\begin{eqnarray*}
\Psi(z):=\sqrt{\frac{\Im m_{sc}(z)}{n\eta}}+\frac{1}{n\eta}.
\end{eqnarray*}
\begin{lem} \label{lem.t.2}Under the assumptions in Lemma \ref{lem.t.0}, one has with overwhelming probability
\begin{eqnarray}
|\mathcal{F}_0(z)|\leq n^{O(\epsilon)}\sup_{E\in[s_1,s_2]}\Psi(z)(s_2-s_1), \label{t.5}
\end{eqnarray}
and for $1\leq k\leq 5$,
\begin{eqnarray}
|\mathcal{F}_k(z)|\leq n^{O(\epsilon)}\sup_{E\in[s_1,s_2]}\left(\left(\frac{\Im S_{\mathbf{x}\mathbf{x}}(z)}{n\eta}+\Psi^2(z)\right)+(|x_a|^2+|x_b|^2)\right)(s_2-s_1).\nonumber\\
\label{t.6}
\end{eqnarray}
\end{lem}
\begin{proof} First of all, by truncation, we have that the elements are bounded by $n^{\epsilon}$. Moreover, by assumption one has
\begin{eqnarray}
2n^{-1/2-\epsilon}\geq \eta\gg n^{-3/4-2\epsilon} \label{p.p.p}
\end{eqnarray}
for some sufficiently small $\epsilon>0$. Thus we always have
$z=E+\sqrt{-1}\eta\in\mathbf{S}$ which is defined in (\ref{101010}).
Now we come to verify (\ref{t.5}). By definition,
\begin{eqnarray*}
\mathcal{F}_0=\int_{s_1}^{s_2}(R_{\mathbf{x}\mathbf{x}}(z)-n^{-1}trR(z))dE.
\end{eqnarray*}
Observe that
\begin{eqnarray*}
n^{\epsilon}\gg (\log n)^{\log\log n}
\end{eqnarray*}
for sufficiently large $n$.
Using the isotropic local semicircle law (\ref{p.5}), one has with overwhelming probability
\begin{eqnarray}
|S_{\mathbf{x}\mathbf{x}}(z)-m_{sc}(z)|\leq n^{O(\epsilon)}\Psi(z),\quad |n^{-1}trS(z)-m_{sc}(z)|\leq n^{O(\epsilon)}\Psi(z).\nonumber\\
\label{p.22}
\end{eqnarray}
 Moreover, by (3.28), (3.29) of \cite{KY2012} and the discussions above them we know
\begin{eqnarray}
|S_{\mathbf{x}a}(z)|,|R_{\mathbf{x}a}(z)|\leq n^{O(\epsilon)}\left(\sqrt{\frac{\Im S_{\mathbf{x}\mathbf{x}}(z)}{n\eta}}+\Psi(z)+|x_a|\right) \label{p.20}
\end{eqnarray}
and
\begin{eqnarray}
|S_{ij}(z)|, |R_{ij}(z)|=O(1) \label{p.21}
\end{eqnarray}
with overwhelming probability. Moreover, analogously, the bound in (\ref{p.20}) also holds for $S_{a\mathbf{x}}(z)$, $R_{a\mathbf{x}}(z)$ with overwhelming probability.
Then by (\ref{t.7}), (\ref{p.22})-(\ref{p.21}) one can get that
\begin{eqnarray*}
|R_{\mathbf{x}\mathbf{x}}(z)-m_{sc}(z)|\leq n^{O(\epsilon)}\Psi(z),\quad |n^{-1}trR(z)-m_{sc}(z)|\leq  n^{O(\epsilon)}\Psi(z).
\end{eqnarray*}
Thus (\ref{t.5}) follows immediately.

Now we come to verify (\ref{t.6}). Note that for $1\leq k\leq 5$, the total number of the factors $R_{\mathbf{x}a}$, $R_{\mathbf{x}b}$, $R_{a\mathbf{x}}$,
$R_{b\mathbf{x}}$, $S_{\mathbf{x}a}$,$S_{\mathbf{x}b}$,$S_{a\mathbf{x}}$ and
$S_{b\mathbf{x}}$ in each $[(-RV)^kR]_{\mathbf{x}\mathbf{x}}$ or $[(-RV)^kS]_{\mathbf{x}\mathbf{x}}$ is $2$. Now
let $p$ be the total number of the factors $R_{\mathbf{x}a}$, $R_{a\mathbf{x}}$,
$S_{\mathbf{x}a}$ and $S_{a\mathbf{x}}$,
and $q$ be the total number of the factors $R_{\mathbf{x}b}$, $R_{b\mathbf{x}}$,
 $S_{\mathbf{x}b}$ and $S_{b\mathbf{x}}$. Thus $p+q=2$. Then by (\ref{p.20}) and (\ref{p.21}) one obtains
\begin{eqnarray*}
|\mathcal{F}_k|&\leq &n^{O(\epsilon)}\sup_{E\in[s_1,s_2]} \sum_{p+q=2}(\sqrt{\frac{\Im S_{\mathbf{x}\mathbf{x}}(z)}{n\eta}}+\Psi(z)+|x_a|)^p\nonumber\\
&&\times(\sqrt{\frac{\Im S_{\mathbf{x}\mathbf{x}}(z)}{n\eta}}+\Psi(z)+|x_b|)^q(s_2-s_1)\nonumber\\
&\leq & n^{O(\epsilon)} \sup_{E\in[s_1,s_2]}(\sqrt{\frac{\Im S_{\mathbf{x}\mathbf{x}}(z)}{n\eta}}+\Psi(z)+|x_a|)^2(s_2-s_1)\nonumber\\
&&+ n^{O(\epsilon)}\sup_{E\in[s_1,s_2]}(\sqrt{\frac{\Im S_{\mathbf{x}\mathbf{x}}(z)}{n\eta}}+\Psi(z)+|x_b|)^2(s_2-s_1)\nonumber\\
&\leq & n^{O(\epsilon)}\sup_{E\in[s_1,s_2]}\left(\left(\frac{\Im S_{\mathbf{x}\mathbf{x}}(z)}{n\eta}+\Psi^2(z)\right)+(|x_a|^2+|x_b|^2)\right)(s_2-s_1)
\end{eqnarray*}
with overwhelming probability.
Thus we conclude the proof of Lemma \ref{lem.t.2}.
\end{proof}
Following from Lemma \ref{lem.t.2} and the definite bound
\begin{eqnarray*}
|\mathcal{F}_0|\leq \eta^{-1}(s_2-s_1),
\end{eqnarray*}
one can see that
\begin{eqnarray*}
\mathbb{E}(\Im\mathcal{F}_0)^4\leq n^{O(\epsilon)}\sup_{E\in[s_1,s_2]}\left(\left(\sqrt{\frac{\Im m_{sc}(z)}{n\eta}}+\frac{1}{n\eta}\right)\right)^4(s_2-s_1)^4\leq n^{-1+O(\epsilon)}(s_2-s_1)^2.
\end{eqnarray*}
Here we used the definition of $\eta$ in Lemma \ref{lem.t.0}. However, relying on Lemma \ref{lem.t.2}, we can provide a better bound on the 4-th moment of $\Im\mathcal{F}_0$ by a ``bootstrap'' strategy. Precisely, we will show the following bound,
\begin{eqnarray}
\mathbb{E}(\Im\mathcal{F}_0)^4\leq n^{-3/2+O(\epsilon)}(s_2-s_1)^2. \label{p.255}
\end{eqnarray}
To verify (\ref{p.255}), we need the following crude bound for
(\ref{1010})
for any Wigner matrix $M_n$ satisfying the assumptions of Theorem \ref{th.1}.
\begin{lem}  \label{lem.t.3} Under the assumptions of Lemma \ref{lem.t.0}, we have
\begin{eqnarray*}
\mathbb{E}\left(\int_{s_1}^{s_2}(\Im(G_{\mathbf{x}\mathbf{x}}(z)-\frac1n trG(z)))dE\right)^4\leq n^{-3/2+O(\epsilon)}(s_2-s_1)^2.
\end{eqnarray*}
\end{lem}
Before proving Lemma (\ref{lem.t.3}), we explain here how it implies (\ref{p.255}).
Note that by definition,
\begin{eqnarray*}
\Im\mathcal{F}_0=\int_{s_1}^{s_2}\Im(R_{\mathbf{x}\mathbf{x}}(z)-n^{-1}trR(z))dE.
\end{eqnarray*}
By (\ref{t.7}), it suffices to show
\begin{eqnarray}
\mathbb{E}\left(\int_{s_1}^{s_2}\Im(S_{\mathbf{x}\mathbf{x}}(z)-n^{-1}trS(z))dE\right)^4\leq n^{-3/2+O(\epsilon)}(s_2-s_1)^2. \label{t.8}
\end{eqnarray}
Note $M_n^{\gamma}$ is also a Wigner matrix satisfying the asumptions in Theorem \ref{th.1}. Thus (\ref{t.8}) follows immediately. So does (\ref{p.255}). Now we come to prove Lemma \ref{lem.t.3}.
\begin{proof} [Proof of Lemma \ref{lem.t.3}]
Now we will use the mentioned strategy to estimate the one step difference
\begin{eqnarray*}
\bigg{|}\mathbb{E}\left(\int_{s_1}^{s_2}(\Im(G_{\mathbf{x}\mathbf{x}}^\gamma(z)-\frac1n trG^{\gamma}(z)))dE\right)^4-\mathbb{E}\left(\int_{s_1}^{s_2}(\Im(G_{\mathbf{x}\mathbf{x}}^{\gamma-1}(z)-\frac1n trG^{\gamma-1}(z)))dE\right)^4\bigg{|}
\end{eqnarray*}
by inserting the crude bounds of $\mathcal{F}_0$ and $\mathcal{F}_k$ provided in Lemma \ref{lem.t.2} at first, and then using the telescoping argument we will obtain the bound in Lemma {\ref{lem.t.3}}. By the discussions above, we have
\begin{eqnarray}
&&\mathbb{E}\left(\Im\int_{s_1}^{s_2}(S_{\mathbf{x}\mathbf{x}}(z)-n^{-1}trS(z))dE\right)^4\nonumber\\
&= &
\mathbb{E}(\Im(\mathcal{F}_0
+\sum_{k=1}^{4}n^{-k/2}\mathcal{F}_k+n^{-5/2}\mathcal{F}_5))^4\nonumber\\
&=&\mathcal{A}_{ab}+\mathbb{E}\sum_{k_1,\cdots, k_4=0}^5n^{-\sum_{i=1}^4k_i/2}\mathbf{1}_{\{\sum_{i=1}^4k_i\geq 5\}}\prod_{i=1}^4\Im\mathcal{F}_{k_i}, \label{t.10}
\end{eqnarray}
where $\mathcal{A}_{ab}$ is a quantity only depending on the first four moments of $v_{ab}$ and independent of $v_{ab}$ itself. Analogously,
\begin{eqnarray}
&&\mathbb{E}\left(\Im\int_{s_1}^{s_2}(T_{\mathbf{x}\mathbf{x}}(z)-n^{-1}trT(z))dE\right)^4\nonumber\\
&=&\mathcal{A}_{ab}+\mathbb{E}\sum_{k_1,\cdots, k_4=0}^5n^{-\sum_{i=1}^4k_i/2}\mathbf{1}_{\{\sum_{i=1}^4k_i\geq 5\}}\prod_{i=1}^4\Im\mathcal{F}_{k_i}(\gamma-1\rightarrow \gamma), \label{10101010}
\end{eqnarray}
where $\mathcal{F}_{k}(\gamma-1\rightarrow \gamma)$ stands for the quantity defined through replacing $V$ and $S$ by $\widetilde{V}$ and $T$ respectively in the definition of $\mathcal{F}_k$. Therefore, to get the one step difference one only needs to estimate the second terms of (\ref{t.10}) and (\ref{10101010}). We will only handle (\ref{t.10}) below. (\ref{10101010}) is just the same.

Note that it suffices to estimate the contribution of
\begin{eqnarray}
\mathbb{E}n^{-\kappa/2}\sum_{k_1,\cdots, k_4=0}^5\mathbf{1}_{\{\sum_{i=1}^4k_i= \kappa\}}\prod_{i=1}^4\Im\mathcal{F}_{k_i}. \label{t.10.1}
\end{eqnarray}
for $5\leq \kappa\leq 20$.
Now we denote the right hand sides of (\ref{t.5}) and (\ref{t.6}) by $D_1$ and $D_2$ respectively. Note that by the assumption on $\eta$,
\begin{eqnarray*}
&&D_1\leq n^{-1/4+O(\epsilon)}(s_2-s_1)^{3/4},
\end{eqnarray*}
and
\begin{eqnarray}
D_2\leq n^{-1/2+O(\epsilon)}(s_2-s_1)^{1/2}+n^{O(\epsilon)}(|x_a|^2+|x_b|^2)(s_2-s_1). \label{p.26}
\end{eqnarray}
with overwhelming probability.
Combining with the definite bounds
\begin{eqnarray}
||R(z)||_{op}, ||S(z)||_{op}\leq \eta^{-1}, \label{p.266}
\end{eqnarray}
 we can use the upper bound in (\ref{p.26}) as a definite one when we take expectations towards the polynomials in $D_2$.
Now let $m$ be the number of $0$ in the collection $\{k_1,k_2,k_3,k_4\}$. Then we have
\begin{eqnarray*}
&&|n^{-\kappa/2}\mathbb{E}\sum_{k_1,\cdots, k_4=0}^5\mathbf{1}_{\{\sum_{i=1}^4k_i= \kappa\}}\prod_{i=1}^4\Im\mathcal{F}_{k_i}|\nonumber\\
&\leq &Cn^{-\kappa/2}\sum_{m=0}^3D_1^m\mathbb{E}D_2^{4-m}\nonumber\\
&\leq &Cn^{-\kappa/2}(s_2-s_1)^2\sum_{m=0}^3\left(n^{-1/4+O(\epsilon)}\right)^m
\left(n^{-1/2+O(\epsilon)}+n^{O(\epsilon)}(|x_a|^2+|x_b|^2)\right)^{4-m}\nonumber\\
&\leq & n^{-\kappa/2}(s_2-s_1)^2\sum_{m=0}^3\left(n^{-m/4+O(\epsilon)}\right)
\left(n^{-(4-m)/2+O(\epsilon)}+n^{O(\epsilon)}(|x_a|^2+|x_b|^2)\right).
\end{eqnarray*}
Here we have used the fact that $|x_a|\leq 1$.
Then it is not difficult to see
\begin{eqnarray}
&&|\sum_{a,b}n^{-\kappa/2}\mathbb{E}\sum_{k_1,\cdots, k_4=0}^5\mathbf{1}_{\{\sum_{i=1}^4k_i= \kappa\}}\prod_{i=1}^4\Im\mathcal{F}_{k_i}|\nonumber\\
&\leq & \sum_{a,b}n^{-\kappa/2}\left(n^{-5/4+O(\epsilon)}+n^{O(\epsilon)}(|x_a|^2+|x_b|^2)\right)(s_2-s_1)^2\nonumber\\
&\leq & n^{-\kappa/2+1+O(\epsilon)}(s_2-s_1)^2. \label{t.9}
\end{eqnarray}
Here we have used the fact that $\sum_a |x_a|^2=1$.
Then by (\ref{t.10}), (\ref{t.9}) and the fact $\kappa\geq 5$ one has
\begin{eqnarray*}
&&|\mathbb{E}\left(\int_{s_1}^{s_2}(\Im(G_{\mathbf{x}\mathbf{x}}(z)-\frac1n trG(z)))dE\right)^4-\mathbb{E}\left(\int_{s_1}^{s_2}(\Im(\widetilde{G}_{\mathbf{x}\mathbf{x}}(z)-\frac1n tr\widetilde{G}(z)))dE\right)^4|\nonumber\\
&&\leq n^{-3/2+O(\epsilon)}(s_2-s_1)^2.
\end{eqnarray*}
Using Lemma \ref{lem.t.0} for the Gaussian case which has been proved above, we can conclude the proof.
\end{proof}
Now we proceed to the proof of Lemma \ref{lem.t.0}. Again we will resort to the telescoping argument. But now we will use (\ref{p.255}) instead of the crude bound (\ref{t.5}). We go back to (\ref{t.10}) and provide a better bound for (\ref{t.10.1}) below. Note that by (\ref{t.9}), it suffices to consider the cases of $\kappa=5$ and $\kappa=6$.

At first, we come to deal with the case of $\kappa=5$. To this end, we split the set $\{\{k_1,\cdots,k_4\}:\sum_{i=1}^4k_i=5\}$ into the following four cases.\\

({\bf{i}}): $\{k_1,k_2,k_3,k_4\}=\{0,0,0,5\}$, \\

({\bf{ii}}): $\{k_1,k_2,k_3,k_4\}=\{0,0,1,4\}, \{0,0,2,3\}$,\\

({\bf{iii}}): $\{k_1,k_2,k_3,k_4\}=\{0,1,1,3\}, (0,1,2,2)$,\\

({\bf{iv}}): $\{k_1,k_2,k_3,k_4\}=\{1,1,1,2\}$.

Note that for case ({\bf{i}}), by using (\ref{p.266}), (\ref{p.255}) and Lemma \ref{lem.t.2} we see that
\begin{eqnarray*}
&&n^{-5/2}\sum_{a,b}|\mathbb{E}\Im\mathcal{F}_{k_1}\Im\mathcal{F}_{k_2}
\Im\mathcal{F}_{k_3}\Im\mathcal{F}_{k_4}|\nonumber\\
&\leq& n^{-5/2}\sum_{a,b}\prod_{i=1}^4(\mathbb{E}(\Im\mathcal{F}_{k_i})^4)^{1/4}\nonumber\\
&\leq &n^{-5/2}\sum_{a,b}n^{-9/8+O(\epsilon)}\left(n^{-1/2+O(\epsilon)}+n^{O(\epsilon)}(|x_a|^2+|x_b|^2)\right)(s_2-s_1)^2\nonumber\\
&=&n^{-17/8+O(\epsilon)}(s_2-s_1)^2.
\end{eqnarray*}
Now we come to evaluate the case ({\bf{ii}}).
\begin{eqnarray*}
&&n^{-5/2}\sum_{a,b}|\mathbb{E}\Im\mathcal{F}_{k_1}\Im\mathcal{F}_{k_2}
\Im\mathcal{F}_{k_3}\Im\mathcal{F}_{k_4}|\nonumber\\
&\leq& n^{-5/2}\sum_{a,b}n^{-3/4+O(\epsilon)}\left(n^{-1/2+O(\epsilon)}+n^{O(\epsilon)}(|x_a|^2+|x_b|^2)\right)^2(s_2-s_1)^2\nonumber\\
&\leq &n^{-5/2}\sum_{a,b}n^{-3/4+O(\epsilon)}\left(n^{-1+O(\epsilon)}+n^{O(\epsilon)}(|x_a|^2+|x_b|^2)\right)(s_2-s_1)^2\nonumber\\
&=&n^{-9/4+O(\epsilon)}(s_2-s_1)^2.
\end{eqnarray*}
Thus both of these two cases can be bounded well by our estimates in (\ref{t.6}) and (\ref{p.255}). Now we come to deal with the cases ({\bf{iii}}) and ({\bf{iv}}) whose estimates need more accurate bounds on the products of $\Im\mathcal{F}_k$. This relies on the observation that with overwhelming probability,
\begin{eqnarray}
|\mathcal{F}_1|&\leq& n^{O(\epsilon)}\sup_{E\in[s_1,s_2]}(\sqrt{\frac{\Im S_{\mathbf{x}\mathbf{x}}(z)}{n\eta}}+\Psi(z)+|x_a|)(\sqrt{\frac{\Im S_{\mathbf{x}\mathbf{x}}(z)}{n\eta}}+\Psi(z)+|x_b|)(s_2-s_1)\nonumber\\
&\leq &\left(n^{-1/2+O(\epsilon)}+n^{-1/4+O(\epsilon)}(|x_a|+|x_b|)+n^{O(\epsilon)}|x_a||x_b|\right)(s_2-s_1)^{1/2}. \label{t.t.t.t}
\end{eqnarray}
Note that (\ref{t.t.t.t}) is a result of fact that $p=q=1$ for $[-RVR]_{\mathbf{x}\mathbf{x}}$ (See page 22 for the definitions of $p$ and $q$).
Thus when $\{k_1,k_2,k_3,k_4\}=\{0,1,1,3\}$, one has
\begin{eqnarray*}
&&n^{-5/2}\sum_{a,b}|\mathbb{E}
\Im\mathcal{F}_{0}(\Im\mathcal{F}_{1})^2
\Im\mathcal{F}_{3}|\nonumber\\
&\leq & n^{-5/2}\sum_{a,b}n^{-3/8+O(\epsilon)}\left(n^{-1/2+O(\epsilon)}+n^{-1/4+O(\epsilon)}(|x_a|+|x_b|)+n^{O(\epsilon)}|x_a||x_b|\right)^2\nonumber\\
&&\times \left(n^{-1/2+O(\epsilon)}+n^{O(\epsilon)}(|x_a|^2+|x_b|^2)\right)(s_2-s_1)^2\nonumber\\
&\leq & n^{-5/2}\sum_{a,b}n^{-3/8+O(\epsilon)}\left(n^{-1
+O(\epsilon)}+n^{-1/2+O(\epsilon)}(|x_a|^2+|x_b|^2)+n^{O(\epsilon)}|x_a|^2|x_b|^2\right)\nonumber\\
&&\times \left(n^{-1/2+O(\epsilon)}+n^{O(\epsilon)}(|x_a|^2+|x_b|^2)\right)(s_2-s_1)^2\nonumber\\
&\leq & n^{-19/8+O(\epsilon)}(s_2-s_1)^2.
\end{eqnarray*}
For $\{k_1,k_2,k_3,k_4\}=\{0,1,2,2\}$, one has
\begin{eqnarray*}
&&n^{-5/2}\sum_{a,b}|\mathbb{E}\Im\mathcal{F}_0\Im\mathcal{F}_1
(\Im\mathcal{F}_2)^2|\nonumber\\
&\leq & n^{-5/2}\sum_{a,b}n^{-3/8+O(\epsilon)}\left(n^{-1/2+O(\epsilon)}+n^{-1/4+O(\epsilon)}(|x_a|+|x_b|)+n^{O(\epsilon)}|x_a||x_b|\right)\nonumber\\
&&\times\left (n^{-1/2+O(\epsilon)}+n^{O(\epsilon)}(|x_a|^2+|x_b|^2)\right)^2(s_2-s_1)^2\nonumber\\
&\leq & n^{-5/2}\sum_{a,b}n^{-3/8+O(\epsilon)}\left(n^{-1/2+O(\epsilon)}+n^{-1/4+O(\epsilon)}(|x_a|+|x_b|)+n^{O(\epsilon)}|x_a||x_b|\right)\nonumber\\
&&\times \left(n^{-1+O(\epsilon)}+n^{O(\epsilon)}(|x_a|^4+|x_b|^4)\right)(s_2-s_1)^2\nonumber\\
&\leq& \bigg{(}n^{-5/2}\sum_{a,b}n^{-3/8+O(\epsilon)}|x_a||x_b|(n^{-1+O(\epsilon)}+|x_a|^4+|x_b|^4)+n^{-17/8+O(\epsilon)}\bigg{)}(s_2-s_1)^2.
\end{eqnarray*}
Note that $\sum_{a}|x_a|=O(\sqrt{n})$. Thus
\begin{eqnarray*}
\sum_{a,b}|x_a||x_b|= O(n),\quad \sum_{a,b}|x_a|^5|x_b|= O(\sqrt{n}).
\end{eqnarray*}
Thus we have
\begin{eqnarray*}
n^{-5/2}\sum_{a,b}|\mathbb{E}\Im\mathcal{F}_0\Im\mathcal{F}_1
(\Im\mathcal{F}_2)^2|\leq n^{-17/8+O(\epsilon)}(s_2-s_1)^2.
\end{eqnarray*}
The estimation towards case ({\bf{iv}}) is similar. We do it as follows.
\begin{eqnarray*}
&&n^{-5/2}\sum_{a,b}|\mathbb{E}(\Im\mathcal{F}_1)^3\Im\mathcal{F}_2|\nonumber\\
&\leq & n^{-5/2}\sum_{a,b}\left(n^{-1/2+O(\epsilon)}+n^{-1/4+O(\epsilon)}(|x_a|+|x_b|)+n^{O(\epsilon)}|x_a||x_b|\right)^3\nonumber\\
&&\times \left(n^{-1/2+O(\epsilon)}+n^{O(\epsilon)}(|x_a|^2+|x_b|^2)\right)(s_2-s_1)^2\nonumber\\
&\leq&n^{-5/2}\sum_{a,b}
\left(n^{-3/2+O(\epsilon)}+n^{-3/4+O(\epsilon)}(|x_a|^3+|x_b|^3)+n^{O(\epsilon)}|x_a|^3|x_b|^3\right)\nonumber\\
&&\times \left(n^{-1/2+O(\epsilon)}+n^{O(\epsilon)}(|x_a|^2+|x_b|^2)\right)(s_2-s_1)^2\nonumber\\
&\leq &\bigg{(}n^{-5/2}\sum_{a,b}|x_a|^3|x_b|^3(n^{-1/2+O(\epsilon)}+n^{O(\epsilon)}(|x_a|^2+|x_b|^2))+n^{-9/4+O(\epsilon)}\bigg{)}(s_2-s_1)^2\nonumber\\
&\leq &n^{-9/4+O(\epsilon)}(s_2-s_1)^2.
\end{eqnarray*}
In summary, we have
\begin{eqnarray}
|\mathbb{E}n^{-5/2}\mathbf{1}_
{\{\sum_{i=1}^4k_i=5\}}
\prod_{i=1}^4\Im\mathcal{F}_{k_i}|
=n^{-17/8+O(\epsilon)}(s_2-s_1)^2. \label{s.9}
\end{eqnarray}

Now we come to deal with the case of $\kappa=6$. When there is at least one $k_i$ equal to $0$, it will be easy to check that the contributions of such terms are negligible by using (\ref{t.6}) and (\ref{p.255}). We leave the details to the readers. Now we still have the case where there is no $0$ in $\{k_1,k_2,k_3,k_4\}$. Since $\kappa=6$, we have
\begin{eqnarray*}
\{k_1,k_2,k_3,k_4\}=
\{1,1,2,2\}~or~\{1,1,1,3\}.
\end{eqnarray*}
The discussions for these two cases are quite similar to that of ({\bf{iv}}) for $\kappa=5$. Actually, we can get
\begin{eqnarray}
|\mathbb{E}n^{-3}\mathbf{1}_
{\{\sum_{i=1}^4k_i=6\}}
\prod_{i=1}^4\Im\mathcal{F}_{k_i}|=n^{-5/2+O(\epsilon)}(s_2-s_1)^2. \label{u.9}
\end{eqnarray}
Then by (\ref{t.9}), (\ref{s.9}) and (\ref{u.9}) we can complete the comparison procedure. Thus (\ref{10.4}) follows. So does Lemma \ref{lem.t.0}.
\end{proof}

\end{document}